\numberwithin{equation}{section}
\newtheorem{pro}{{Proposition}}[section]
\newtheorem{lemma}[pro]{{Lemma}}
\newtheorem{theorem}[pro]{{Theorem}}
\newtheorem{remark}[pro]{{Remark}}
\newtheorem{corollary}[pro]{{Corollary}}
\newtheorem{conj:intro}[pro]{Conjecture}
\newtheorem{thm:intro}{Theorem}
\newtheorem{cor:intro}{Corollary}
\newtheorem{def:intro}{Definition}
\newtheorem*{thm*}{Theorem}
\newtheorem*{thmA}{Theorem A}
\newtheorem*{thmB}{Theorem B}
\newtheorem*{thmC}{Theorem C}
\newcommand{\ra}{{\rightarrow}}
\newcommand{\lra}{{\longrightarrow}}
\newcommand{\cf}{{cf$.$\,}}
\newcommand{\cN}{{\mathcal{N}}}
\newcommand{\cM}{{\mathcal{M}}}
\newcommand{\cA}{{\mathcal{A}}}
\newcommand{\cT}{{\mathcal{T}}}
\newcommand{\SU}{\mathop{\rm SU}\nolimits}
\def\mbi#1{\boldsymbol{#1}} 
\newcommand{\al}{\alpha}
\newcommand{\be}{\beta}
\newcommand{\ga}{\gamma}
\newcommand{\om}{\omega}
\newcommand{\Om}{\Omega}
\newcommand{\we}{\wedge}
\newcommand{\lam}{\lambda}
\newcommand{\CC}{{\mathbb C}}
\newcommand{\HH}{{\mathbb H}}
\newcommand{\KK}{{\mathbb K}}
\newcommand{\RR}{{\mathbb R}}
\newcommand{\ZZ}{{\mathbb Z}}
\newcommand{\PSp}{\mathop{\rm PSp}\nolimits}
\newcommand{\PO}{\mathop{\rm PO}\nolimits}
\newcommand{\PU}{\mathop{\rm PU}\nolimits}
\newcommand{\SO}{\mathop{\rm SO}\nolimits}
\newcommand{\Sp}{\mathop{\rm Sp}\nolimits}
\newcommand{\Conf}{\mathop{\rm Conf}\nolimits}
\newcommand{\Aut}{\mathop{\rm Aut}\nolimits}
\newcommand{\Psh}{\operatorname*{Psh}\,}
\newcommand{\Diff}{\operatorname*{Diff}\,}
\newcommand{\Iso}{\mathop{\rm Iso}\nolimits}
\newcommand{\Out}{\mathop{\rm Out}\nolimits}
\newcommand{\Inn}{\mathop{\rm Inn}\nolimits}
\newcommand{\fG}{{\mathsf{G}}}
\newcommand{\sfA}{{\mathsf{A}}}
\newcommand{\sfG}{{\mathsf{G}}}
\newcommand{\sfD}{{\mathsf{D}}}
\begin{document}

\baselineskip 13pt 
\pagestyle{myheadings} 
\thispagestyle{empty}
\setcounter{page}{1}
\title[]{On the automorphism group of parabolic structures and closed aspherical manifolds}

\author[]{Oliver Baues}
\address{Department of Mathematics\\ 
University of Fribourg\\
Chemin du Mus\' ee 23\\
CH-1700 Fribourg, Switzerland}
\email{oliver.baues@unifr.ch}

\author[]{Yoshinobu Kamishima}
\address{Department of Mathematics, Josai University\\
Keyaki-dai 1-1, Sakado, Saitama 350-0295, Japan}
\email{kami@tmu.ac.jp}
\keywords{$CR$-structure, Pseudo-Hermitian structure, Conformal structure, Quaternionic contact structure, Sasaki metric, (Hyper-) K\"ahler manifold}
\subjclass[2010]{22E41, 53C10, 57S20, 53C55}  

\begin{abstract} 
In this expository paper we discuss several properties on closed aspherical parabolic ${\sfG}$-manifolds $X/\Gamma$. These are manifolds $X/\Gamma$,  where $X$ is a smooth contractible manifold with a parabolic ${\sfG}$-structure for which
$\Gamma\leq \Aut_{\sfG}(X)$ is a discrete subgroup acting properly discontinuously on $X$ with compact quotient. By a parabolic 
$\sfG$-structure on $X$ we have in mind a Cartan structure which is modeled on one of the classical parabolic geometries arising from simple Lie groups  $\sfG$ of rank one. Our results concern in particular the properties of the automorphism groups $\Aut_{\sfG}(X/\Gamma)$. Our main results show that the existence of certain 
parabolic ${\sfG}$-structures can pose strong restrictions on the topology of compact aspherical manifolds $X/\Gamma$ and their parabolic automorphism groups. In this realm we prove that any compact aspherical standard $CR$-manifold with virtually solvable fundamental group is diffeomorphic to a quotient of a Heisenberg manifold of complex type with its standard $CR$-structure. Furthermore we discuss the analogue properties of standard quaternionic contact manifolds in relation to the quaternionic Heisenberg group.
\end{abstract}
\date{September 22, 2023}
\thanks{This work was partially supported by JSPS grant No 22K03319}

\maketitle
\thispagestyle{empty}

\section{Introduction}
Let ${\HH}^{n+1}_{\KK}$ be the $\KK$-hyperbolic space over
$\KK = \RR, \CC$ or $\HH$. (Compare \cite{CG} and the definitions therein.)
The group of $\KK$-hyperbolic isometries $\PU(n+1,1;\KK)$
of ${\HH}^{n+1}_{\KK}$
extends to an analytic action on the boundary sphere $\partial {\HH}^{n+1}_{\KK}=S^{|\KK|(n+1)-1}$.
This corresponds to  the  {\rm rank one} standard parabolic geometry on $S^{|\KK|(n+1)-1}=\PU(n+1,1;\KK)/P_\KK$
where $P_\KK$ is a maximal parabolic subgroup.
In terms of classical geometry,  according to whether $\KK=\RR,\,\CC$ or $\HH$, the model spaces 
 $S^{|\KK|(n+1)-1}=S^n,S^{2n+1}$ or $S^{4n+3}$
admit a  \emph{conformally flat structure, spherical $CR$-structure} or
\emph{spherical quaternionic contact structure}, respectively. 
(For precise definition of these notions, see \cite{OK1} or \cite{BG}, and Section \ref{sec:parabolic_geom} of this article, for example.) \smallskip

The geometry of these spaces is intricately reflected in 
the standard graduation of the Lie algebra of $\PU(n+1,1;\KK)$, which is 
$\PO(n+1,1)$ for $\KK=\RR$ or $\PU(n+1,1)$, $\PSp(n+1,1)$ for $\KK=\CC,\HH$, where: 
\begin{equation*}\begin{split}
\mathfrak{so}(n+1,1)&= \mathfrak{g}^{-1} + \mathfrak{g}^{0} +
 \mathfrak{g}^{1}=  \RR^n + (\mathfrak{so}(n)+ \RR) + (\RR^n)^*, \\
  \mathfrak{su}(n+1,1)&= \mathfrak{g}^{-2}+ \mathfrak{g}^{-1} + \mathfrak{g}^{0} +
 \mathfrak{g}^{1} + \mathfrak{g}^{2}\\ &
 ={\rm Im}\CC +  \CC^n + (\mathfrak{u}(n)+ \RR) + (\CC^n)^* + ({\rm Im}\CC)^*, \\
\mathfrak{sp}(n+1,1)&= \mathfrak{g}^{-2}+ \mathfrak{g}^{-1} + \mathfrak{g}^{0} +
 \mathfrak{g}^{1} + \mathfrak{g}^{2}\\
&={\rm Im}\HH +  \HH^n + (\mathfrak{sp}(n)+ \mathfrak{sp}(1) + \RR) + (\HH^n)^* + ({\rm Im}\HH)^*
 \end{split}
\end{equation*} in which $\displaystyle P_\KK$ is generated by the parabolic subalgebra
$\displaystyle\mathfrak{p} =\mathfrak{g}^{0} +
 \mathfrak{g}^{1}$ for $\KK=\RR$, or $\displaystyle\mathfrak{g}^{0} +
 \mathfrak{g}^{1} + \mathfrak{g}^{2}$ for $\CC, \HH$, respectively (see \cite{Al}). \smallskip

Put $\sfG=P_\KK$, for brevity.
By a \emph{parabolic ${\sfG}$-structure} on a manifold $M$, we thus mean 
either one of a positive definite conformal structure,  
a strictly pseudo-convex $CR$-structure,  or a
positive definite quaternionic contact structure ($qc$-structure for short) on $M$.
If $\Aut_{\sfG}(M)$ is the group of structure preserving transformations on such a parabolic ${\sfG}$-manifold $M$, 
$\Aut_{\sfG}(M)$ is called:  (1) The group of conformal transformations $\Conf(M,[g])$. 
(2) The group of $CR$-transformations $\Aut_{CR}(M,\{\sfD,J\})$, or
(3) The group of $qc$-transformations $(\Aut_{qc}(M,\sfD,\,\{J_\al\}_{\al=1}^3)$, respectively.\medskip

\paragraph{\em Rigidity of parabolic  $\sfG$-manifolds with non-proper automorphism group} 
An important observation on parabolic $\sfG$-manifolds is that the
automorphism group  $\Aut_{\sfG}(M)$ does not necessarily act properly on $M$. In particular this is the case for the model spheres  $S^{|\KK|(n+1)-1}$. Given a parabolic ${\sfG}$-manifold $M$, in case $\Aut_{\sfG}(M)$ is a non-proper group 
the parabolic $\fG$-manifold $M$ is completely determined by works of D. V.\  Alekseevsky \cite{Al}, 
J.\ Ferrand \cite{Ferrand}, R.\ Schoen \cite{SC}, J.\, Lee \cite{JL},  
C.\, Frances \cite{CF}, S.\, Ivanov and D.\, Vassilev \cite{IV}, Webster \cite{WE}  and others, as follows: 

\begin{thmA}\label{thm:noncomp}
If the automorphism group $\Aut_{\sfG}(M)$ of a parabolic $\sfG$-manifold $M$ does \emph{not act properly}, then $M$ with its parabolic structure admits a structure preserving diffeomorphism  to one of the standard model spaces as specified in $(1), (2), (3) :$  
\vskip0.1cm
$(1)$ $M$ is conformal to either the standard sphere $S^{n}$ or the euclidean space $\RR^n$.
Here it  occurs
$$  \left( \Iso(M),{\rm Conf}(M) \right)= \begin{cases}
\left( {\rm O}(n+1), {\rm PO}(n+1,1) \right) & (M=S^{n}) \, ,\\
\left(\RR^n\rtimes {\rm O}(n), \RR^n\rtimes ({\rm O}(n)\times \RR^{+}) \right) & (M=\RR^n) \, .
\end{cases}
$$ 
\vskip0.2cm
$(2)$ $M$ has a spherical $CR$-structure isomorphic to
either the standard sphere $S^{2n+1}$ or the Heisenberg Lie group $\cN$ (with its canonical $CR$-structure).
It occurs
\begin{equation*}
\left({\Psh}_{CR}(M),\Aut_{CR}(M)\right)=\begin{cases}
\left({\rm U}(n+1), {\rm PU}(n+1,1)\right) \ \ \ \  (M=S^{2n+1}),\\
\left( \cN \rtimes {\rm U}(n), \cN\rtimes ({\rm U}(n)\times \RR^{+}) \right) \ (M=\cN).\, 
\end{cases}\end{equation*}
\vskip0.2cm
$(3)$
 $M$ has a spherical $qc$-structure isomorphic to
either the standard sphere $S^{4n+3}$ or the quaternionic Heisenberg nilpotent Lie group $\cM$.\\
\ $({\Psh}_{qc}(M),\Aut_{qc}(M))$ occurs 
\begin{equation*}
\begin{cases}
\left(\Sp(n+1)\cdot \Sp(1),\ {\rm PSp}(n+1,1),\ S^{4n+3}\right) \ \ \ \ \ \ \ \ \ \ \  (M=S^{4n+3}),\, \\
\left(\cM\rtimes {\rm Sp}(n)\cdot \Sp(1),\ \cM\rtimes ({\rm Sp}(n)\cdot \Sp(1)\times \RR^{+}),\ \cM\,\right)
\ (M=\cM).\,\\
\end{cases}\end{equation*}
\end{thmA}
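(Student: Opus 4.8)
The plan is to treat the three geometries $\KK=\RR,\CC,\HH$ in parallel, converting the hypothesis that $\Aut_{\sfG}(M)$ \emph{does not act properly} into two structural statements: \emph{(i)} the parabolic $\sfG$-structure on $M$ is \emph{flat}, i.e.\ locally isomorphic to the homogeneous model $S^{|\KK|(n+1)-1}=\PU(n+1,1;\KK)/P_\KK$; and \emph{(ii)} $M$ is, up to structure isomorphism, this model sphere itself or the complement of a single point in it. For step (i), observe that a proper action of $\Aut_{\sfG}(M)$ would carry an invariant compatible metric --- a Riemannian metric inside the conformal class, a pseudo-Hermitian (Webster) metric, or a $qc$-metric --- produced by averaging over the proper action; so non-properness says exactly that the structure is \emph{essential}, and an essential rank-one parabolic structure must be flat. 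For $\KK=\RR$ this is the Ferrand--Obata phenomenon in the form of Ferrand \cite{Ferrand} and Schoen \cite{SC}; for $\KK=\CC$ the $CR$ analogue of Schoen \cite{SC}, Lee \cite{JL} and Webster \cite{WE}; for $\KK=\HH$ the quaternionic contact analogue of Ivanov--Vassilev \cite{IV} and Frances \cite{CF}. All three fall under the dynamical rigidity of essential rank-one parabolic Cartan geometries of Frances \cite{CF}, refining Alekseevsky \cite{Al}.

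These same theorems upgrade (i) to the global conclusion (ii). The mechanism is the North--South type dynamics of parabolic one-parameter subgroups of $\PU(n+1,1;\KK)$ acting on the boundary sphere $S^{|\KK|(n+1)-1}$: a non-proper family of automorphisms, examined through the developing map $\dev\colon\widetilde M\to S^{|\KK|(n+1)-1}$ and its holonomy $\rho\colon\pi_1(M)\to\PU(n+1,1;\KK)$, forces $\dev$ to be a diffeomorphism onto all of $S^{|\KK|(n+1)-1}$ or onto $S^{|\KK|(n+1)-1}\setminus\{q\}$ for a single point $q$, and precludes any nontrivial deck transformation, so that $M$ itself is structure-isomorphic to one of these two model spaces. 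Identifying $S^{|\KK|(n+1)-1}\setminus\{q\}$ with the standard affine chart of the model (the orbit of the nilradical opposite to $P_\KK$), the second alternative gives $M$ isomorphic to $\RR^n$, to the Heisenberg group $\cN$, or to the quaternionic Heisenberg group $\cM$, equipped with its standard left-invariant parabolic structure; the first alternative gives the model sphere.

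It remains to record the two automorphism groups attached to each model. On the model sphere every structure automorphism extends to an automorphism of the flat Cartan geometry and therefore lies in $\PU(n+1,1;\KK)$ acting by its (effective) boundary action; conversely all of $\PU(n+1,1;\KK)$ acts, so $\Aut_{\sfG}(M)=\PU(n+1,1;\KK)$, equal to ${\rm PO}(n+1,1)$, ${\rm PU}(n+1,1)$, ${\rm PSp}(n+1,1)$ respectively, and its maximal compact subgroup --- ${\rm O}(n+1)$, ${\rm U}(n+1)$, ${\rm Sp}(n+1)\cdot{\rm Sp}(1)$ --- is exactly the stabilizer of the round metric, the standard Sasaki structure, resp.\ the standard $3$-Sasaki structure, which gives $\Iso(M)$, ${\Psh}_{CR}(M)$, ${\Psh}_{qc}(M)$. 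On the non-compact model the point $q$ at infinity is intrinsic to the non-proper dynamics, hence fixed by every structure automorphism, so $\Aut_{\sfG}(M)$ lies in the parabolic stabilizer $P_\KK\le\PU(n+1,1;\KK)$ of $q$; on the affine chart this acts as the similarity group of the nilpotent model --- left translations, the linear isotropy group ${\rm O}(n)$, ${\rm U}(n)$ or ${\rm Sp}(n)\cdot{\rm Sp}(1)$, and the grading dilations $\RR^{+}$ --- all of which are genuine automorphisms, which yields the stated semidirect products; discarding the dilations leaves the isometry group of the invariant metric, that is the stated $\Iso(M)$, ${\Psh}_{CR}(M)$, resp.\ ${\Psh}_{qc}(M)$.

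The main obstacle is the combined step (i)--(ii): passing from the purely topological non-properness hypothesis to flatness and then to the global identification of $M$ with a homogeneous model. In the conformal case this is the full strength of the Ferrand--Schoen resolution of the Lichnerowicz conjecture, and in the $CR$ and $qc$ cases the analogous, and technically more demanding, theorems of Schoen, Webster, Ivanov--Vassilev and Frances. As the present account is expository, I would not reprove these; I would instead assemble them into the uniform dichotomy above and then carry out the (routine) Lie-theoretic identification of the isometry and automorphism groups of the two model spaces in each of the three rank-one geometries.
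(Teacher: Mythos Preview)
Your proposal is correct and matches the paper's treatment: the paper does not prove Theorem~A at all but simply attributes it to the combined works of Alekseevsky, Ferrand, Schoen, Lee, Webster, Frances, and Ivanov--Vassilev, exactly the references you invoke. Your sketch goes a bit further than the paper by outlining the mechanism (essentiality $\Rightarrow$ flatness via the Ferrand--Obata type theorems, then the developing-map dichotomy forcing $M$ to be the model sphere or its one-point complement), but this is precisely the content of the cited references and is the right expository summary; your closing remark that one would not reprove these deep results but only assemble them is exactly the stance the paper takes.
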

\smallskip \vskip0.1cm

The theorem also gives the pairs $({\Psh}_{\sfG}(M),\Aut_{\sfG}(M))$ for the respective cases (1), (2), (3). Here ${\Psh}_{\sfG}(M)$ is the maximal subgroup of $\Aut_{\sfG}(M)$ that is acting properly on $M$. In particular, in case (1), ${\Psh}_{\sfG}(M)$ is the subgroup of $\Aut_{\sfG}(M))$ which preserves the canonical Riemannian metric that defines the conformal structure on $M$. In cases (2) and (3), the group ${\Psh}_{\sfG}(M)$ coincides with the subgroup of $\Aut_{\sfG}(M)$ that preserves the canonical structure defining contact forms. \smallskip

\paragraph{\em $\sfG$-Hermitian subgroups of  $\Aut_{\sfG}(M)$}

As the rigidity theorem shows, for any parabolic $\sfG$-manifold $M$,  there exists a  maximal subgroup ${\Psh}_{\sfG}(M)$ of
 $\Aut_{\sfG}(M)$ that is acting properly on $M$. 
If the parabolic structure on $M$ is of type (2) or (3), it is  defined by the conformal class of a contact form $\om$. In this case we define  ${\Psh}_{\sfG}(M, \omega)$ to be the subgroup of $\Aut_{\sfG}(M)$ that preserves $\omega$. Note that 
${\Psh}_{\sfG}(M, \omega)$ always acts properly on $M$ (see \cite{OK1}), that is, ${\Psh}_{\sfG}(M, \omega)$ is contained in ${\Psh}_{\sfG}(M)$. 
The groups ${\Psh}_{\sfG}(M, \omega)$ are also called  $\sfG$-Hermitian subgroups of  $\Aut_{\sfG}(M)$, see
 Section \ref{sec:parabolic_geom} below. As to the precise relation of ${\Psh}_{\sfG}(M)$ with the $\sfG$-Hermitian subgroups
we additionally have the following:  \smallskip
\begin{thm:intro}[see Proposition \ref{prop:vanish}, \cite{OK1}]\label{thm:proper}
Let $M$ be a parabolic $\sfG$-manifold and let $H \leq \Aut_{\sfG}(M)$  be a closed subgroup. Then there exists a canonical cohomology class  
$[\lam_{\sfG}]$ in the differentiable cohomology group $H^1_d(H,C^\infty(M,\RR^+))$ 
with the following properties:
\begin{enumerate}
\item[i)]	 If\/ $[\lam_{\sfG}]=0$ and the
 $\fG$-structure is conformal, then there exists a Riemannian metric $g$ representing the conformal structure such that $H$ is contained in $\Iso(M,g)$. 
\item[ii)] If\/ $[\lam_{\sfG}]=0$ and the parabolic structures is of type $(2)$ or $(3)$, there exists a compatible contact form $\omega$ 
(that is, $\om$ is representing the parabolic structure) such that $H$ is contained in 
$\displaystyle {\Psh}_{\sfG}(M,\omega)$.
\item[iii)] The group $H$ acts properly on $M$ if and only if  $[\lam_{\sfG}]=0$.  
\end{enumerate}
\end{thm:intro}   \smallskip     

If $M$ is compact, Theorem \ref{thm:proper} combined  with Theorem A 
implies that $\Aut_{\sfG}(M)$ is a compact Lie group, except if $M$ is one of the standard parabolic $\sfG$-spheres as described in Theorem A. \smallskip

In the following we will be in particular interested in compact  parabolic $\sfG$-manifolds. 
\smallskip 

\paragraph{\em Automorphisms of aspherical parabolic $\sfG$-manifolds} 
A compact manifold $M$ is called aspherical if its universal covering manifold $X$ is contractible. In that case $\Aut_{\sfG}(M)$ is compact and its identity component  $\Aut_{\sfG}(M)^0$ is a compact torus. The latter fact is a consequence of the  following fundamental result on compact  Lie group actions on closed aspherical manifolds: 

\begin{thmB}[Conner and  Raymond  \cite{CR1,CR}]\label{thm:2,3}
Let $X/\Gamma$ be a closed 
aspherical Riemannian manifold. Then the isometry group $\Iso(X/\Gamma)$ is a finite group or
the identity component
$\Iso(X/\Gamma)^0$ is isomorphic to a $k$-torus $T^k$.
Moreover, there is a central group extension\eqref{Pshqcr},
$1\ra \ZZ^k\ra\, \Gamma\lra\, Q\ra 1$, where $Q=\Gamma/\ZZ^k$.
\end{thmB}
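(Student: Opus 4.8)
\noindent\emph{Proof proposal.} The plan is to obtain Theorem B from two classical inputs — Myers--Steenrod and Smith theory — with the genuine work concentrated in a single rigidity lemma about circle actions. Since $X/\Gamma$ is a closed Riemannian manifold, Myers--Steenrod makes $\Iso(X/\Gamma)$ a compact Lie group acting effectively and smoothly on $M:=X/\Gamma$; if it is $0$-dimensional it is finite and we are done, so assume its identity component $G:=\Iso(X/\Gamma)^0$ is a compact connected Lie group of positive dimension. Since $M$ is aspherical, $\Gamma=\pi_1(M)$ acts freely on the contractible space $X$, hence is torsion-free. I would also record the elementary fact that for any action of a topological group $H$ on $M$ and any base point, the orbit map $\iota\colon H\to M$ satisfies $\iota_*(\pi_1 H)\subseteq Z(\pi_1 M)$: in $\pi_1(H\times M)=\pi_1 H\times\pi_1 M$ the two factors commute, and under the action map $H\times M\to M$ (which restricts to $\iota$ on $H\times\{x_0\}$ and to the identity on $\{e\}\times M$) their images $\iota_*(\pi_1 H)$ and $\pi_1 M$ commute in $\pi_1 M$.

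The key lemma I would isolate is: \emph{if $S^1$ acts smoothly on a closed aspherical manifold $M$ with null-homotopic orbit maps, then $S^1$ acts trivially on $M$} (equivalently: an effective smooth $S^1$-action on a closed aspherical manifold is fixed-point free). To prove it, suppose $S^1$ acts non-trivially; dividing by the (finite) ineffective kernel we may assume the action is effective, the orbit map still being null-homotopic, i.e. $\iota_*(\pi_1 S^1)=0$ in $\Gamma$. Then the action lifts to a smooth $S^1$-action on the universal cover $X$ commuting with the deck group $\Gamma$: the composite $S^1\times X\to S^1\times M\to M$ is $\pi_1$-trivial on the domain, hence lifts through $X\to M$, and connectedness of $S^1$ forces the normalized lift to be an action commuting with $\Gamma$. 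The lifted action is still effective, so $F:=X^{S^1}$ is a \emph{proper} closed $\Gamma$-invariant submanifold of $X$; by Smith theory (with $\ZZ/2$-coefficients, $X$ being contractible) $F$ is non-empty and $\ZZ/2$-acyclic, and since the slice representation of $S^1$ at a fixed point is faithful, $\dim F\le n-2<n$. Now $\Gamma$ acts freely, properly discontinuously and cocompactly on $F$, so $N:=F/\Gamma$ is a closed manifold of dimension $<n$; but the Cartan--Leray spectral sequence of the regular covering $F\to N$, together with the $\ZZ/2$-acyclicity of $F$, gives $H^*(N;\ZZ/2)\cong H^*(\Gamma;\ZZ/2)\cong H^*(M;\ZZ/2)$, so $H^n(N;\ZZ/2)\cong H^n(M;\ZZ/2)\cong\ZZ/2\ne 0$ by Poincar\'e duality — contradicting $\dim N<n$. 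Establishing this lemma — in particular pinning down the Smith-theoretic input and the lift to the universal cover precisely — is the main obstacle; everything else is formal.

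Granting the lemma, the rest follows. First, $G$ cannot be non-abelian: otherwise it would contain a subgroup $H\cong\SU(2)$ or $\SO(3)$ acting effectively, and since $\pi_1 H$ is finite while $\Gamma$ is torsion-free the orbit map $H\to M$ would be $\pi_1$-trivial, hence null-homotopic by asphericity; restricting it to a maximal circle $S^1\subseteq H$ and applying the lemma would make $S^1$ act trivially, contradicting effectiveness of $H$. Thus $G=\Iso(X/\Gamma)^0=T^k$ for some $k\ge 1$. Now take an orbit map $\iota\colon T^k\to M$; by the first paragraph $\iota_*(\ZZ^k)\subseteq Z(\Gamma)$. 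If $\iota_*$ had a non-trivial kernel, then — the kernel being a direct summand of $\ZZ^k$, as $\Gamma$ is torsion-free — some circle subgroup $S^1\subseteq T^k$ would have $\iota_*(\pi_1 S^1)=0$, hence a null-homotopic orbit map, and the lemma would force this $S^1$ to act trivially, against effectiveness of the $T^k$-action. Hence $\iota_*$ is injective, its image is a central copy of $\ZZ^k$ in $\Gamma$, and with $Q:=\Gamma/\ZZ^k$ we obtain the central extension $1\to\ZZ^k\to\Gamma\to Q\to 1$ (which is vacuous when $k=0$, i.e. when $\Iso(X/\Gamma)$ is finite).
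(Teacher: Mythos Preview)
The paper does not give its own proof of Theorem~B; it is quoted in the introduction as a result of Conner and Raymond and used thereafter as a black box (for instance in the proof of Theorem~\ref{thm:1,2,3}, where the injectivity of ${\rm ev}_*\colon\ZZ^k\to\Gamma$ is simply attributed to \cite[Lemma~4.2]{CR}). So there is no in-paper argument to compare yours against; you have supplied a proof where the authors only cite one.

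Your argument is essentially the classical Conner--Raymond one and is sound in outline. The one step that needs more care is the Smith-theoretic input: the standard Smith inequalities with $\ZZ/2$-coefficients apply to $\ZZ/2$-actions, not directly to $S^1$-actions, so the bare assertion that $F=X^{S^1}$ is $\ZZ/2$-acyclic is not yet justified. Two clean fixes are available. Either replace $F=X^{S^1}$ by $F=X^{\ZZ/p}$ for a prime $p$: classical Smith theory then gives $\ZZ/p$-acyclicity directly, $F$ is still a proper $\Gamma$-invariant submanifold since $\ZZ/p\subset S^1$ inherits effectiveness, and your cohomological contradiction (with $\ZZ/p$-coefficients) goes through verbatim. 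Or keep $F=X^{S^1}$ but note that the smooth $S^1$-action on the compact manifold $M$ has only finitely many isotropy types, so $X^{S^1}=X^{\ZZ/p^k}$ for all large $k$, and iterated Smith theory along the chain $X\supset X^{\ZZ/p}\supset X^{\ZZ/p^2}\supset\cdots$ yields $\ZZ/p$-acyclicity of $X^{S^1}$. With either adjustment the remainder --- Myers--Steenrod for compactness, ruling out non-abelian factors via torsion-freeness of $\Gamma$, and the injectivity of $\iota_*$ from the key lemma --- is correct and is exactly the standard route.
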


Theorem B shows that if  the fundamental group $\Gamma$
of $M$ has no normal solvable subgroup then $\Aut_{\sfG}(M)$ is a 
finite group. Well known examples of such aspherical  manifolds $M$ are
compact locally symmetric spaces of non-compact type (without local flat factors).  In this context, we remark the following general fact:

\begin{thm:intro} \label{thm:1,2,3}
Let $X/\Gamma$ be a closed aspherical manifold such that $\Gamma$
has no normal solvable subgroup. If $X/\Gamma$ admits a parabolic ${\sfG}$-structure, then its automorphism 
$\Aut_{\sfG}(X/\Gamma)$ is a finite group which is isomorphic to a subgroup of $\Out(\Gamma)$.
In particular, $\Gamma$ is of finite index in its normalizer $N_{\Aut_{\sfG}(X)}(\Gamma)$. 
\end{thm:intro}
\vskip0.1cm
In the theorem $\Out(\Gamma) = \Aut(\Gamma)/ \Inn(\Gamma)$ denotes the outer automorphism group of $\Gamma$.\medskip

\paragraph{\em $CR$- and $qc$-structures on closed aspherical manifolds}\  \smallskip

From our viewpoint of \emph{parabolic ${\sfG}$}-structures, we are interested how the existence of a parabolic ${\sfG}$-structure determines the topology of $X/\Gamma$ and in particular its smooth structure. We show that the existence of certain  $CR$- or $qc$-parabolic structures  on $X/\Gamma$, that admit a non-trivial connected group of automorphisms $\Aut_{\sfG}(X/\Gamma)^0$,  poses a strong restriction on $\Gamma$. 
In fact, we show that under the assumption that $\Gamma$ is virtually solvable any standard $CR$-manifold   $X/\Gamma$ is \emph{diffeomorphic} to a Heisenberg type manifold which is derived from the standard model space in (2) of Theorem A. 
In the $qc$-case a much stronger rigidity result holds, namely we show that \emph{any} closed aspherical standard $qc$-manifold $X/\Gamma$ is \emph{$qc$-equivalent} to a $qc$-manifold of quaternionic Heisenberg type (as in (3) of Theorem A).  
\smallskip

\paragraph{\em Heisenberg manifolds} 

Recall from Theorem A 
that the Heisenberg Lie group $\cN$ admits a maximal proper subgroup of affine transformations $\cN\rtimes {\rm U}(n)$. 
The group $\cN\rtimes {\rm U}(n) = {\Psh}_{CR}(\cN)$ then preserves the canonical left-invariant $CR$-structure, and the canonical pseudo-Hermitian structure on the Heisenberg group $\cN$ (see for example, \cite{Ka1,OK}). Given a torsion-free discrete uniform subgroup $\Gamma$
contained in $\cN\rtimes {\rm U}(n)$, the compact quotient manifold $$M= \cN/\Gamma$$ is then called a \emph{Heisenberg infra-nilmanifold}. 
 \smallskip

\paragraph{\em Standard $CR$-structures} Suppose that $M$ admits a $CR$-structure with contact form $\om$, where $M$ is a $2n+1$-dimensional manifold. Then the $CR$-structure is called \emph{standard} if the  Reeb vector field associated to $\om$ 
generates a one-parameter subgroup of ${\Psh}_{CR}(M, \omega)$. \smallskip

In that sense, every Heisenberg infra-nilmanifold carries a canonical standard $CR$-structure,  
where $\Aut_{CR}(M)^0 = \Psh_{CR}(M, \omega)^0=S^1$.
(The structure is induced from the standard $CR$-structure on the Heisenberg group $\cN$.) See Section \ref{sec:CRsolv}
for details. \smallskip

Further examples of (aspherical) standard $CR$-manifolds may be constructed as $S^1$-bundles over any compact Hermitian locally symmetric spaces $B$, using the K\"ahler class of $B$ to determine the circle bundle. (In fact, this  construction works over every compact K\"ahler manifold $B$. See \cite{OK} and the references therein). \smallskip

\paragraph{\em Virtually solvable fundamental group $\Gamma$}
Let $M= X/\Gamma$ be a  closed aspherical manifold such that $\Gamma$ is a virtually solvable group (which  means that $\Gamma$ contains a solvable subgroup of finite index).  
In fact, since $M$ is aspherical, $\Gamma$ is a torsion-free virtually polycyclic group. It is known that every such group occurs as the fundamental group of a 
compact aspherical manifold $X/\Gamma$. Note further that the fundamental group  $\Gamma$ determines $M$ up to homeomorphism, but not necessarily up to diffeomorphism unless some further geometric structure on $M$ is specified that enforces smooth rigidity  (\cf \cite{Baues} and the references therein.) \smallskip 

Every \emph{standard} $CR$-manifold
with solvable fundamental group turns out to be diffeomorphic to a Heisenberg infra-nilmanifold:

\begin{thm:intro}\label{Tcr}
Let $M = X/\pi$ be a $2n+1$-dimensional closed
aspherical positive definite strictly pseudoconvex \emph{standard} $CR$-manifold. If $\pi$ is virtually solvable then then there exists a discrete faithful representation
$\rho : \pi\ra\, \cN\rtimes {\rm U}(n)$ and $M$ is diffeomorphic to the Heisenberg infra-nilmanifold 
$\cN/\rho(\pi)$. 
In particular, $\pi$ is virtually nilpotent and $\Aut_{CR}(M)^0=S^1$.
\end{thm:intro}

\smallskip

By choosing a representative contact form $\om$ for the $CR$-structure on $M$, we obtain a pseudo-Hermitian manifold $(M,(\om,J))$.  Note that a standard pseudo-Hermitian manifold $(M,(\om,J))$ is equivalent to a Sasaki manifold $(M, g, (\om,J),\xi)$ by assigning a positive definite Riemannian metric $g=\om\cdot \om+d\om\circ J$, 
called Sasaki metric. (Compare \cite{OK,Bl}.)\smallskip

By the existence of the $S^1$-action generated by the Reeb field,
we see that $M$ admits a fibering over a K\"ahler orbifold. We will prove:

\vskip0.2cm
\noindent{{\bf Theorem 3$'$}. }
{\em If the fundamental group of a closed aspherical Sasaki manifold $M$
is virtually solvable, then $M$ is diffeomorphic to a Heisenberg infra-nilmanifold.
}\vskip0.2cm

A Sasaki manifold is called \emph{regular} if the $S^1$-action generated by the Reeb field is free. For a regular Sasaki manifold 
Theorem $3'$ is stated and proved in  \cite[Corollary 2, Proposition 6.10]{OK}. Assuming that the fundamental group of $M$ is nilpotent,  a proof of Theorem 3$'$  involving methods of rational homotopy theory is provided in \cite{NY}. \smallskip

The following result is
obtained by O. Baues and V. Cort\'es \cite{BC} which is used for our proof.

\begin{thmC}\label{Tconf}
Let $X/\Gamma$ be a closed aspherical K\"ahler manifold with $\Gamma$ virtually solvable. Then a finite cover of $X/\Gamma$ is biholomorphic to a complex torus.
\end{thmC}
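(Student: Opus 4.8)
The plan is to reduce the statement, by invoking known restrictions on K\"ahler groups, to the case where $\Gamma$ is free abelian, and then to identify $X/\Gamma$ with its own Albanese torus; asphericity will be decisive in the final step.

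First I would carry out the reduction. Since $X/\Gamma$ is a closed aspherical manifold, $\Gamma$ is torsion-free and $\mathrm{cd}\,\Gamma=\dim_{\RR}X=2n$. By the theorem of Arapura--Nori, a virtually solvable group that is the fundamental group of a compact K\"ahler manifold is virtually nilpotent; hence $\Gamma$ has a finite-index nilpotent, torsion-free subgroup $N$, and the associated finite cover $M_N$ of $X/\Gamma$ is again a closed aspherical K\"ahler manifold with $M_N=K(N,1)$. By the formality of the rational homotopy type of a compact K\"ahler manifold (Deligne--Griffiths--Morgan--Sullivan), the minimal model of $M_N$ is formal; since $M_N$ is aspherical with nilpotent fundamental group, this model is generated in degree one, and formality of such a model forces the Malcev Lie algebra of $N$ to be abelian. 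Thus $\Gamma$ is virtually abelian and, being torsion-free, contains a finite-index subgroup isomorphic to $\ZZ^{m}$ with $m=\mathrm{cd}\,\Gamma=2n$. Passing to this finite cover, I may assume $\Gamma\cong\ZZ^{2n}$; put $M=X/\ZZ^{2n}$.

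Next I would bring in the Albanese map $\alpha\colon M\to\mathrm{Alb}(M)$. Since $M$ is K\"ahler, $\dim_{\CC}\mathrm{Alb}(M)=\tfrac12 b_1(M)=\tfrac12\,\mathrm{rank}\,\Gamma^{\mathrm{ab}}=n=\dim_{\CC}M$, and $\alpha_{*}\colon\pi_1(M)\to\pi_1(\mathrm{Alb}(M))$ is the abelianization map $\ZZ^{2n}\to\ZZ^{2n}$, hence an isomorphism. As $M$ and $\mathrm{Alb}(M)$ are both aspherical, $\alpha$ is a homotopy equivalence by Whitehead's theorem, so it is a map of degree one; a degree-one holomorphic map between compact connected complex $n$-folds is surjective and generically injective, i.e. $\alpha$ is bimeromorphic.

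It remains to upgrade $\alpha$ to a biholomorphism, and this is the step I expect to be the main obstacle; it is where asphericity does the real work. Since a finite bimeromorphic morphism onto a smooth complex space is an isomorphism, it suffices to rule out positive-dimensional fibers of $\alpha$. Suppose $F=\alpha^{-1}(q)$ has $\dim F\geq1$. Because $\alpha_{*}$ is an isomorphism on $\pi_1$, the pullback of the universal covering $\CC^{n}\to\mathrm{Alb}(M)$ along $\alpha$ is the universal covering of $M$; hence the lift $\tilde\alpha\colon\tilde M\to\CC^{n}$ of $\alpha$ is proper, and one of its fibers is a compact analytic subset $\tilde F\subset\tilde M$ biholomorphic to $F$, so $\dim\tilde F\geq1$. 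But $\tilde M$ is contractible, so $H_{2\dim F}(\tilde M;\RR)=0$ and the fundamental class $[\tilde F]$ vanishes; pairing it with the closed positive $(1,1)$-form $\kappa$ obtained by pulling back a K\"ahler form on $M$ gives $0=\langle[\kappa]^{\dim F},[\tilde F]\rangle=\int_{\tilde F}\kappa^{\dim F}>0$, a contradiction. Therefore $\alpha$ is finite, hence a biholomorphism, and the finite cover $M$ of $X/\Gamma$ is biholomorphic to the complex torus $\mathrm{Alb}(M)$. If one wished to prove this entirely from scratch, the genuine difficulty would lie in the K\"ahler-group input used in the reduction (the Arapura--Nori theorem and the formality argument) rather than in the Albanese step above.
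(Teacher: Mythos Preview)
The paper does not supply its own proof of Theorem~C; the result is quoted from Baues--Cort\'es \cite{BC} and used as a black box in Section~\ref{sec:biho}. So there is no in-paper argument to compare your attempt against.

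That said, your argument is correct and follows the same overall strategy as \cite{BC}: invoke Arapura--Nori to pass from virtually solvable to virtually nilpotent, then use DGMS formality together with the fact that a minimal Sullivan model generated in degree one is formal only when the underlying nilpotent Lie algebra is abelian (this is essentially Hasegawa's theorem on formality of nilmanifolds) to reduce to $\pi_1\cong\ZZ^{2n}$, and finally identify the finite cover with its Albanese torus. Your way of ruling out positive-dimensional Albanese fibers---lifting to the contractible universal cover and integrating a power of the pulled-back K\"ahler form over a compact analytic subvariety there---is a clean direct use of asphericity.

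One small remark: you assert that the lifted fiber $\tilde F$ is \emph{biholomorphic} to $F$. This is true because, since $\alpha_*$ is a $\pi_1$-isomorphism, the universal cover is the fiber product $\tilde M\cong M\times_{\mathrm{Alb}(M)}\CC^n$, so $\tilde\alpha^{-1}(p)\cong\alpha^{-1}(\bar p)$ canonically. But note that for the contradiction you only need that $\tilde\alpha$ is proper with some fiber of positive dimension, which is immediate from this pullback description without the finer identification.
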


We next seek whether similar results hold for  $4n+3$-dimensional $qc$-manifolds. 
\smallskip

\paragraph{\em Quaternionic Heisenberg manifolds}
A quaternionic Heisenberg Lie group 
is a $4n+3$-dimensional nilpotent Lie group  $\mathcal M$ with  
center $\RR^3$ whose quotient is isomorphic to
the quaternionic vector space $\HH^n$, whose structure determines the Lie product on $\cM$ (see \cite{Al,Ka}). The group 
$\cM$ admits a maximal proper subgroup of affine transformations
 ${\Psh}_{qc}(\cM)=\cM\rtimes \Sp(n)\cdot \Sp(1)$. Then ${\Psh}_{qc}(\cM)$ preserves the canonical $qc$-structure 
 and the canonical $qc$-Hermitian structure on $\cM$.
 (Compare Theorem A.) A quotient $\cM/\Gamma$
by a torsion-free discrete uniform subgroup $\Gamma\leq {\Psh}_{qc}(\cM)$ is called a quaternionic Heisenberg infra-nilmanifold. 
\smallskip

For any $qc$-manifold $M$, let $\cT =\{\xi_1,\xi_2,\xi_3\}$ denote the three-dimensional integrable distribution complementary to the   codimension three subbundle $\sfD$ on $M$ determined by the $qc$-structure, called \emph{$qck$-distribution} (see Section \ref{sec:parabolic_geom}). If $\cT$ generates a subgroup
of ${\Psh}_{qc}(M)$, then $M$ is called a \emph{standard} $qc$-manifold. \smallskip

\paragraph{\em Remark} When $M$ is a closed aspherical standard $qc$-manifold, 
$\cT$ generates a three-torus $T^3\leq
{\Psh}_{qc}(M)$.  
In this case, the $qc$-structure on $M$  thus does not give rise to a $3$-Sasaki structure since the $qck$-distribution $\cT$ does not generate an $\Sp(1)$-action on $M$ but a $T^3$-action \cite[Theorem 5.4]{Ka}. Then the  quotient space $M/\cT$ inherits a hyper-K\"ahler structure from $M$.  \smallskip

We have  the following strong rigidity property for standard $qc$-structures on closed aspherical manifolds. 

\begin{thm:intro}\label{Tqc}
Let $X/\pi$ be a positive definite closed aspherical \emph{standard} $qc$-manifold.
Then $X/\pi$ is $qc$-isometric to
a quaternionic Heisenberg infra-nilmanifold $\cM/\pi$
where $\pi\leq \cM\rtimes \PSp(n)$, 
is a discrete uniform subgroup.
In particular, $\pi$ is virtually nilpotent and $\Aut_{qc}(X/\pi)^0=T^3$ is a three-torus. 
\end{thm:intro}

In the context of hyper-K\"ahler manifolds the following striking fact plays an important role in the proof of Theorem \ref{Tqc}:  \smallskip

\paragraph{\em Rigidity of aspherical hyper-K\"ahler manifolds}Recall that  the quaternionic torus $T^n_\HH$ admits a natural flat homogeneous hyper-K\"ahler structure which is induced by a linear quaternionic hermitian form on $\HH^n$ (\cf \cite{BG}). We then have:
\vskip0.3cm
\noindent{\bf Lemma  D.}\ 
{\em Let $M$ be a closed aspherical hyper-K\"ahler manifold.
Then a finite cover of 
$M$ is \emph{hypercomplexally isometric} to  
a quaternionic torus $T^n_\HH$ with its natural flat hyper-K\"ahler structure. }
\vskip0.1cm
Here a hypercomplex isomorphism is simultaneously a holomorphic diffeomorphism with respect to each complex structure. This strong rigidity for aspherical hyper-K\"ahler manifolds is a consequence of the Calabi-Yau theorem and the Cheeger-Gromoll splitting theorem. For related result on complex hyperhermitian surfaces, see \cite{Bo}. \smallskip

\paragraph{\em The paper is organized as follows}
In Section 2, we prove Theorem \ref{thm:1,2,3} of the Introduction saying that
$\Aut_{\sfG}(X/\Gamma)$ is finite whenever $\Gamma$ has no normal solvable groups.  
In Section 3, we introduce a cohomology-invariant for the group $\Aut_{\sfG}(M)$ (see Proposition \ref{prop:vanish}) to
show the coincidence of the Lie groups $\Aut_{\sfG}(M)$ and ${\Psh}_{\sfG}(M)$
when $\Aut_{\sfG}(M)$ acts properly on $M$ in Theorem 
\ref{th:equal}.
In particular, when $M$ is compact,
$\Aut_{\sfG}(M)$ is a compact Lie group. 
We prove Theorem \ref{Tcr} in Section \ref{sec:CRsolv}.
Section \ref{sec:qcsolv} concerns the properties of   standard $qc$-manifolds. 


\section{${\sfG}$-manifolds with compact automorphism groups}
\label{sec:compact_auto} 
We study the structure of $\Aut_{\sfG}(X/\Gamma)$ for closed aspherical ${\sfG}$-manifolds.\smallskip

\paragraph{\em Lifting Lemma} To prepare our proof we start our discussion with some well known general setup.  Let $\tilde M$ be the universal covering space of $M=\tilde M/\pi$ and
denote $N_{{\rm Diff}(\tilde M)}(\pi)$ the normalizer of $\pi$ in
${\rm Diff}(\tilde M)$. The conjugation $$\displaystyle
\mu: N_{{\rm Diff}(\tilde M)}(\pi)\ra{\rm Aut}(\pi)$$ 
defined by
$\mu(\tilde f)(\ga)=\tilde f\circ\ga\circ \tilde f^{-1}$
$({}^\forall\,\ga\in \pi)$ induces a homomorphism 
$$ \varphi : 
{\rm Diff}(M) \to {\rm Out}(\pi) \; .$$  

\begin{lemma}[see \cite{LR}] 
\label{lem:lift1_diagram}
There is an exact commutative
diagram:
\vskip0.1cm
\begin{equation}\label{lift1}
\begin{CD}
 @. 1@. 1 @. 1\\
@.  @VVV @VVV @VVV \\
 1@>>> Z(\pi)@>>>  \pi @>\mu>> {\rm Inn}(\pi)\\
 @. @VVV  @VVV @VVV\\
1@>>>Z_{{\rm Diff}(\tilde M)}(\pi)@>>> N_{{\rm Diff}(\tilde M)}(\pi)
 @>\mu>> {\rm Aut}(\pi)\\
@. @V\nu VV @V\nu VV @VVV\\
1@>>>{\rm ker}\,\varphi@>>>{\rm Diff}(M)
@>\varphi>> {\rm Out}(\pi)\\
@. @VVV @VVV @VVV\\
 @. 1@. 1 @. 1\\
\end{CD}
\end{equation} 
\end{lemma}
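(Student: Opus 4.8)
The diagram is standard covering-space bookkeeping (as in the Seifert fibering formalism of \cite{LR}), and the plan is to assemble it one map at a time after fixing the deck action $\pi\leq\Diff(\tilde M)$. First I would set up the three horizontal maps: the inner-automorphism map $\mu\colon\pi\to\Aut(\pi)$, whose image is $\Inn(\pi)$ and whose kernel is $Z(\pi)$ by definition, giving exactness of the top row; the conjugation homomorphism $\mu\colon N_{\Diff(\tilde M)}(\pi)\to\Aut(\pi)$, $\tilde f\mapsto(\gamma\mapsto\tilde f\gamma\tilde f^{-1})$, which is well defined exactly because $\tilde f$ normalizes $\pi$ and whose kernel is, by definition, the centralizer $Z_{\Diff(\tilde M)}(\pi)$, giving exactness of the middle row at its first two terms; and $\varphi\colon\Diff(M)\to\Out(\pi)$, defined by picking a lift $\tilde f\in\Diff(\tilde M)$ of $f$ and setting $\varphi(f)=[\mu(\tilde f)]$. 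For $\varphi$ I would check independence of the lift (two lifts differ by some $\gamma\in\pi$, which alters $\mu(\tilde f)$ only by the inner automorphism $\mu(\gamma)$, hence not the class in $\Out(\pi)$) and that $\varphi$ is a homomorphism; then $\ker\varphi$ is its own definition and the bottom row is exact.

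For the columns, the rightmost one, $1\to\Inn(\pi)\to\Aut(\pi)\to\Out(\pi)\to1$, is the definition of $\Out(\pi)$. For the middle column I would invoke two basic facts of covering theory: the restriction $\nu$ of the covering projection descends every $\tilde f\in N_{\Diff(\tilde M)}(\pi)$ to a diffeomorphism of $M$ with kernel exactly the deck group $\pi$; and every $f\in\Diff(M)$ admits a lift $\tilde f\in\Diff(\tilde M)$, which automatically normalizes $\pi$ since $\tilde f\gamma\tilde f^{-1}$ projects to $\mathrm{id}_M$ and is therefore a deck transformation. Together these give the exact sequence $1\to\pi\to N_{\Diff(\tilde M)}(\pi)\xrightarrow{\nu}\Diff(M)\to1$. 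For the left column I would note $Z(\pi)\leq Z_{\Diff(\tilde M)}(\pi)$; that $\nu$ sends $Z_{\Diff(\tilde M)}(\pi)$ into $\ker\varphi$ (a centralizing lift induces the identity automorphism, hence the trivial class in $\Out(\pi)$) with kernel $Z_{\Diff(\tilde M)}(\pi)\cap\pi=Z(\pi)$; and, for surjectivity onto $\ker\varphi$, that given $f\in\ker\varphi$ one picks any lift $\tilde f$, writes $\mu(\tilde f)=\mu(\gamma)$ with $\gamma\in\pi$ (possible since $\varphi(f)=1$), and replaces $\tilde f$ by $\gamma^{-1}\tilde f\in Z_{\Diff(\tilde M)}(\pi)$, still a lift of $f$. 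This yields $1\to Z(\pi)\to Z_{\Diff(\tilde M)}(\pi)\xrightarrow{\nu}\ker\varphi\to1$.

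It remains to check that every square commutes, which is immediate from the definitions: the two $\mu$'s are compatible with the inclusions $\pi\hookrightarrow N_{\Diff(\tilde M)}(\pi)$ and $\Inn(\pi)\hookrightarrow\Aut(\pi)$, the $\nu$'s are compatible with the inclusions into the $\Diff$-rows, and $\varphi\circ\nu$ equals $N_{\Diff(\tilde M)}(\pi)\xrightarrow{\mu}\Aut(\pi)\to\Out(\pi)$ by the very construction of $\varphi$ from lifts. The only steps with genuine content are the two lifting statements in the middle and left columns---that every diffeomorphism of $M$ lifts to a normalizing diffeomorphism of $\tilde M$, and that an $f$ killing $\varphi$ admits a centralizing lift---and I expect the ``correct the lift by an element of $\pi$'' move to be the one real point; note that asphericity of $M$ is not needed for this lemma, it enters only in the later applications.
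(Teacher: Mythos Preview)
Your argument is correct and complete: you verify exactness of each row and column and commutativity of the squares, with the only substantive steps being the lifting of diffeomorphisms of $M$ to normalizing diffeomorphisms of $\tilde M$ and the ``correct the lift by $\gamma^{-1}$'' trick to produce a centralizing lift of any $f\in\ker\varphi$. This is exactly the standard proof.

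There is nothing to compare against in the paper itself: the lemma is stated with the attribution ``see \cite{LR}'' and no proof is given, as the authors treat it as a well-known piece of covering-space machinery from the Lee--Raymond Seifert-fibering formalism. Your write-up supplies precisely the argument one finds in that reference, so it is entirely in line with what the paper intends.
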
 
Here, $Z_{{\rm Diff}(\tilde M)}(\pi)$
denotes the centralizer of
$\pi$ in ${\rm Diff}(\tilde M)$. \medskip

\paragraph{\em Application to automorphisms of aspherical manifolds}

\begin{proof}[{\bf Proof of Theorem $\ref{thm:1,2,3}$}]

As we show now,  replacing ${\rm Diff}(\tilde M)$ in Lemma \ref{lem:lift1_diagram} by $\Aut_{\sfG}(X)$,
$N_{\Aut_{\sfG}(X)}(\Gamma)$ turns out to be
discrete and the homomorphism 
$\mu: N_{\Aut_{\sfG}(X)}(\Gamma)\ra {\rm Aut}(\Gamma)$ injective.  

As in diagram \eqref{lift1}, there is an exact sequence:
\begin{equation}\label{ex:1}
1\ra \ker \varphi\ra \Aut_{\sfG}(X/\Gamma)\stackrel{\varphi}\lra \Out(\Gamma)
\end{equation} such that $\Aut_{\sfG}(X/\Gamma)^0\leq \ker \varphi$.    
Let $Z_{\Aut_{\sfG}(X)}(\Gamma)$ be the centralizer of $\Gamma$ in $\Aut_{\sfG}(X)$. 
Then as can be inferred from diagram \eqref{lift1}, $\ker \varphi$ is associated with the covering group extension:
\[1\ra\, Z(\Gamma)\ra\, Z_{\Aut_{\sfG}(X)}(\Gamma)\lra \, \ker \varphi \ra 1.\]
Since $Z(\Gamma)$ is the center of $\Gamma$,
note $Z(\Gamma)=\{1\}$ by the hypothesis so that 
\begin{equation}\label{ex:2}
 Z_{\Aut_{\sfG}(X)}(\Gamma)\cong \ker \varphi.
 \end{equation}
 
Now 
$\Aut_{\sfG}(X/\Gamma)$ is compact by Corollary \ref{eq:Pcomp}. 
Since a compact connected Lie group acting on a closed aspherical manifold is a torus (\cf Theorem B),
$\Aut_{\sfG}(X/\Gamma)^0=T^k$,  $k\geq 0$.  
Then the orbit map ${\rm ev}(t)=tz$ of $T^k$ into $X/\Gamma$ at any point $z\in X/\Gamma$ 
induces an injective homomorphism 
 $\displaystyle {\rm ev}_*: \ZZ^k\lra\, \Gamma$ 
such that ${\rm ev}_*( \ZZ^k)\leq Z(\Gamma)$ (\cf \cite[Lemma 4.2]{CR}). 
 As $Z(\Gamma)$ is trivial, by assumption, we have $k=0$.  That is, $\Aut_{\sfG}(X/\Gamma)^0=\{1\}$. In particular $\Aut_{\sfG}(X/\Gamma)$ is a finite group. 
 
Therefore, the subgroup $\ker \varphi$ is finite by \eqref{ex:1}, and  so is $Z_{\Aut_G(X)}(\Gamma)$ by \eqref{ex:2}.
As $Z_{\Aut_{\sfG}(X)}(\Gamma)$ is centralized by $\Gamma$,
it follows $\displaystyle Z_{\Aut_{\sfG}(X)}(\Gamma)=\{1\}$ by \cite[Theorem 2]{BK}. 
Hence $\ker \varphi=\{1\}$, that is $\displaystyle \varphi : \Aut_{\sfG}(X/\Gamma)\ra \Out(\Gamma)$ is injective. 

Since the quotient of $N_{\Aut_{\sfG}(X)}(\Gamma)$ by $\Gamma$ 
is isomorphic to $\Aut_{\sfG}(X/\Gamma)$ from \eqref{lift1}.
As $\Aut_{\sfG}(X/\Gamma)$  is finite, $\Gamma$ is of finite index in $N_{\Aut_{\sfG}(X)}(\Gamma)$. \end{proof}

Along the same direction as the above proof,  we have: 
\begin{corollary}\label{cor:Liegroup}
Let $X/\Gamma$ be a closed aspherical manifold.
Suppose that $G\leq \Diff(X/\Gamma)$ is a connected Lie group.
If $Z(\Gamma)$ is trivial (or finite), then $G$ is a simply connected solvable Lie group.
\end{corollary}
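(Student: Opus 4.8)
The plan is to follow the strategy of the proof of Theorem \ref{thm:1,2,3}, but now with a connected group $G$ acting on $X/\Gamma$ by diffeomorphisms rather than a parabolic automorphism group. First I would invoke Theorem B (Conner--Raymond): since $X/\Gamma$ is closed aspherical, the identity component of its diffeomorphism group that can be realized by a compact Lie subgroup is a torus, and more precisely any compact connected Lie subgroup of $\Diff(X/\Gamma)$ is a torus $T^k$ whose orbit map induces an injection $\ZZ^k \hookrightarrow Z(\Gamma)$. By the hypothesis that $Z(\Gamma)$ is trivial (or finite), this forces $k=0$, so $G$ contains \emph{no} nontrivial compact connected subgroup; in particular $G$ has no nontrivial semisimple part and no nontrivial compact torus. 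I would then argue that a connected Lie group with no nontrivial compact subgroup is diffeomorphic (as a manifold) to a Euclidean space, hence in particular simply connected; and that its Levi decomposition must have trivial semisimple factor (a semisimple Lie group always contains a nontrivial compact subgroup, e.g. a maximal compact which is nontrivial), so $G$ is solvable.

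The key steps, in order: (1) Reduce to showing $G$ has no nontrivial compact connected subgroup. For this, let $K \leq G$ be a maximal compact subgroup; $K^0$ is a compact connected Lie subgroup of $\Diff(X/\Gamma)$, so by Theorem B it is a torus $T^k$, and the Conner--Raymond argument (the orbit evaluation map, \cite[Lemma 4.2]{CR}) gives an injection $\ZZ^k \hookrightarrow Z(\Gamma)$. (2) Conclude $k = 0$ from $Z(\Gamma)$ finite: the image ${\rm ev}_*(\ZZ^k)$ is a finitely generated free abelian group sitting inside a finite group, hence trivial, hence $k = 0$. (3) Deduce $K$ is finite, so $G$ has no nontrivial compact connected subgroup, equivalently $G$ is diffeomorphic to $\RR^{\dime G}$ by the structure theory of connected Lie groups (a maximal compact subgroup is connected in a connected Lie group, so $K = K^0$ is trivial, and $G$ retracts onto it). In particular $\pi_1(G) = 1$. (4) Finally, the Levi decomposition $G = R \rtimes S$ with $R$ the solvable radical and $S$ semisimple: if $S$ were nontrivial it would contain a nontrivial maximal compact subgroup, contradicting (3); hence $S$ is trivial and $G = R$ is solvable.

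The main obstacle — really the only substantive point beyond bookkeeping — is step (1)--(2): one must make sure the Conner--Raymond orbit-map argument applies to the compact \emph{connected} subgroup $K^0$ of an arbitrary (not necessarily compact) $G \leq \Diff(X/\Gamma)$, and that the resulting homomorphism $\ZZ^k \to \Gamma$ lands in the center $Z(\Gamma)$ and is injective. This is exactly the content of \cite[Lemma 4.2]{CR} applied to the $T^k$-action on the closed aspherical manifold $X/\Gamma$, so it is available to us; the remaining steps are standard Lie theory. One should also note the parenthetical ``or finite'' case is handled uniformly by step (2), since a finitely generated free abelian subgroup of a finite group is automatically trivial.
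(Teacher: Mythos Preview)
Your argument is correct and follows exactly the direction the paper indicates: the paper does not spell out a proof of this corollary but only remarks that it goes ``along the same direction'' as the proof of Theorem~\ref{thm:1,2,3}, and your steps (1)--(4) are precisely that adaptation (Conner--Raymond forces any compact connected subgroup to be a torus injecting $\ZZ^k$ into $Z(\Gamma)$, hence trivial; then Iwasawa--Mal'cev and Levi decomposition finish). One cosmetic point: Theorem~B as stated is phrased for isometry groups, so to invoke it for an arbitrary compact connected $K^0\leq\Diff(X/\Gamma)$ you should note (as you implicitly do) that after averaging a Riemannian metric $K^0$ becomes a subgroup of some $\Iso(X/\Gamma,g)$, so Theorem~B applies; alternatively cite \cite{CR1,CR} directly.
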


\section{Invariant substructures for  ${\Psh}_{\sfG}(M)$} 
\label{sec:parabolic_geom} 
Let $M$ be a parabolic $\sfG$-manifold. If $M$ is of $CR$- or $qc$-type,  we shall prove that, when $\Aut_{\sfG}(M)$ acts properly, there exists a representative form $\om$ for the  parabolic $\sfG$-structure such that $\Aut_{\sfG}(M)$ coincides
with ${\Psh}_{\sfG}(M,\omega)$. (The obvious analogue also holds for conformal structures defined by Riemannian metrics. See \cite{OK1} and references therein for both results).\smallskip

\paragraph{\em Geometry associated with parabolic $\sfG$-structures} Whereas a conformal structure is equivalent with a conformal class of Riemannian metrics, the classical geometries  underlying the case (2) and (3) parabolic geometries are considerably more involved. Let us thus briefly recall the geometric data associated with case (2) and (3) parabolic geometries: \smallskip 

In the case of $CR$-structures, 
we have a contact form $\omega$ on a connected smooth manifold $M$, which is  determined up to scaling with a positive function,  and a complex structure $J$ on the contact bundle $\ker \om$ which is  compatible with $\om$ in the sense that the Levi form $d\om  \circ J$ is 
 a positive definite Hermitian form on $D$. These data 
define a \emph{strictly pseudo-convex} $CR$-structure. Note that $\om$ is defined up to a conformal change with a positive function.
\smallskip 

Let a $qc$-structure on a $4n+3$-manifold $M$ be given.  This amounts
to a positive definite codimension three subbundle $\sfD$, which is non-integrable and such that $\sfD+ [\sfD,
\sfD]=TM$. Moreover, there is a hypercomplex structure 
$\{J_k\}_{k=1}^3$ on $\sfD$, and 
an ${\rm Im}\,\HH$-valued $1$-form $\om =\om_1 i+ \om_2 j + \om_3 k$.
It is also required that  $\sfD=\mathop{\ker}\,\om$ and the forms $d\om_k  \circ J_k$ are positive definite Hermitian forms. 
Note that $\om$ is defined up to a conformal change with a positive function and conjugation with $\Sp(1)$.
\medskip

\paragraph{\em Description of the automorphism groups of parabolic $\sfG$-structures}
The associated automorphism groups $\Aut_{\sfG}(M)$ are then described as follows (for cases (2) and (3) compare
\cite{OK1,Ka}):  
\begin{equation}\label{Autqcr}
\begin{cases}
(1)\,\Conf(M)=\bigl\{\, \al\in\Diff(M)\mid \al^*g=u_\al\,g \,\bigr\},\\
(2)\,\Aut_{CR}(M, \{\om,J\}) =\{ \, \al\in\Diff(M)\mid  \alpha^*\om=u_\al\, \om,  \\
 \ \ \ \ \ \ \ \ \ \ \ \ \ \ \ \ \ \ \ \ \ \ \ \ \ \ \ \ \ \ \ \ \  \ \  \alpha_*\circ J=J\circ \alpha_*|_{\ker\, \om} \, \bigr\}, \\
(3) \Aut_{qc}(M, (\om,\{J_k\}_{k=1}^3))=\bigl\{\, \al\in\Diff(M)\mid  \\ 
\ \ \ \ \ \ \ \ \ \ \ \al^*\om =u_\al \; a_\al\cdot\om\cdot\overline{a_\al}, 
 \ \ \al_* \circ J_k=\sum_{j=1}^{3}a_{kj}J_j\circ \al_*|_{\ker\, \om} \,\bigr\}, \\
\end{cases}
\end{equation}
where $u_\al\in C^\infty(M,\RR^+)$, 
$a_\al\in C^\infty(M,\Sp(1))$, and 
the matrix $(a_{kj})\in C^\infty(M,\SO(3))$ is given by the conjugation action of $a_\al$ on ${\rm Im}\,\HH$. \smallskip
  
We would like to emphasize that the definition of $\Aut_{\sfG}(M)$ does not depend on the particular choice of data $g$ or $\omega$ in their conformal class. In fact, the choice of $g$ or $\om$ amounts to choosing a representative geometry. The symmetries of the representative geometry define a subgroup of $\Aut_{\sfG}(M)$. These groups are the isometry group $\Iso(M,g)$, respectively the pseudo-Hermitian groups ${\Psh}_{CR}(M, \{\om,J\})$ and ${\Psh}_{qc}(M, (\om,\{J_k\}_{k=1}^3))$:

\begin{equation}\label{Pshqcr}
\begin{cases}
(1)\,\Iso(M,g)=\bigl\{\al\in\Diff(M)\mid \al^*g=g\ \bigr\},\\
(2)\, {\Psh}_{CR}(M, \om ) =\{\al\in\Diff(M)\mid \alpha^*\om=\om, \\
\ \   \ \ \ \ \ \ \ \ \ \ \ \ \ \ \ \ \ \ \ \ \ \ \ \ \ \ \ \ \ \ \alpha_*\circ J=J\circ \alpha_*|_{\ker\, \om}\bigr\}, \\
(3)\, {\Psh}_{qc}(M, \om)= 
\bigl\{\al\in\Diff(M)\mid  \al^*\om=a_\al\cdot \om\cdot \overline{a_\al},\\
\, \ \ \ \ \ \ \ \ \ \ \ \ \ \ \ \ \ \ \ \ \ \ \ \ \ \ \ \ \ \ \ \al_* \circ J_k=\sum_{j=1}^{3}a_{kj}J_j\circ \al_*|_{\ker\, \om} \bigr\}. \\
\end{cases}
\end{equation}

Note that the groups in \eqref{Pshqcr} vary considerably under a  conformal change of $g$, respectively $\om$, while the group $\Aut_\sfG(M)$ is preserved. 

\subsection{Conformal invariant cohomology class}\label{Confinv}

The space $C^\infty(M,\RR^+)$ of smooth positive functions on $M$ is endowed with an action of $\Aut_{\sfG}(M)$, where for $\al\in \Aut_{\sfG}(M)$, $f\in C^\infty(M,\RR^+)$, we have  
\[
(\al_* f)(x)=f(\al^{-1}x)\,\ (x\in M).\]
Thus $C^\infty(M,\RR^+)$ is a smooth $\Aut_{\sfG}(M)$-module. To any such module there is an associated differentiable group cohomology $H^*_d$ for the Lie group $\Aut_{\sfG}(M)$ (see \cite{OK1}, for detailed explanation). We explain now that the action of $\Aut_{\sfG}(M)$ on  $C^\infty(M,\RR^+)$ gives rise to a natural cohomology class that carries geometric information about the dynamics of this action.  \smallskip 

\paragraph{\em Construction of the associated cohomology class } 

\begin{pro}\label{prop:vanish}
For any closed subgroup $L$ of $\Aut_{\sfG}(M)$ there is a natural cohomology class $[\lam_{\sfG}]\in H^1_d(L,C^\infty(M,\RR^+))$, 
which is associated to the parabolic $\sfG$-structure on $M$. 
\end{pro}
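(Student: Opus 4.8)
The plan is to produce a cocycle directly from the conformal freedom in the defining data of the parabolic $\sfG$-structure, and then to check that its differentiable-cohomology class does not depend on the choices made. Fix a representative of the parabolic structure: in case (1) a Riemannian metric $g$ in the conformal class, in cases (2) and (3) a contact form $\om$ (respectively an $\mathrm{Im}\,\HH$-valued form $\om=\om_1 i+\om_2 j+\om_3 k$) representing the $CR$- or $qc$-structure. For $\al\in L\leq\Aut_{\sfG}(M)$ the descriptions in \eqref{Autqcr} give a well-defined positive function $u_\al\in C^\infty(M,\RR^+)$, characterised by $\al^*g=u_\al\,g$ in case (1), by $\al^*\om=u_\al\,\om$ in case (2), and by $\al^*\om=u_\al\,a_\al\cdot\om\cdot\overline{a_\al}$ in case (3) (here the $\Sp(1)$-part $a_\al$ is auxiliary and does not affect the scalar $u_\al$). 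I then set $\lam_{\sfG}(\al):=u_\al$. The first step is to verify the cocycle identity $\lam_{\sfG}(\al\be)=\lam_{\sfG}(\al)\cdot\bigl(\al_*\lam_{\sfG}(\be)\bigr)$ in the twisted module $C^\infty(M,\RR^+)$; this is a direct computation using $(\al\be)^*=\be^*\al^*$ together with the naturality relation $\al^*(\be^*\om)=(\al^*\be^*\om)$ and the definition $(\al_*f)(x)=f(\al^{-1}x)$, so that $\lam_{\sfG}\in Z^1_d(L,C^\infty(M,\RR^+))$. Smoothness of $\al\mapsto u_\al$ as a map $L\to C^\infty(M,\RR^+)$ (so that it is genuinely a differentiable cocycle) follows from smoothness of the $L$-action on $M$ and on tensor fields.

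The second step is independence of the chosen representative. If $g'=f\cdot g$ (case (1)) or $\om'=f\cdot\om$ (case (2)), with $f\in C^\infty(M,\RR^+)$, then the associated cocycle $\lam'_{\sfG}$ satisfies $\lam'_{\sfG}(\al)=\lam_{\sfG}(\al)\cdot f\cdot(\al_*f)^{-1}$, i.e.\ $\lam'_{\sfG}$ and $\lam_{\sfG}$ differ by the coboundary of $f\in C^0_d(L,C^\infty(M,\RR^+))$; hence $[\lam_{\sfG}]\in H^1_d(L,C^\infty(M,\RR^+))$ is independent of the representative. In case (3) one must additionally allow the replacement $\om\mapsto b\cdot\om\cdot\overline{b}$ for $b\in C^\infty(M,\Sp(1))$; since conjugation by a unit quaternion preserves the norm, this alters only the $\Sp(1)$-component $a_\al$ and leaves the scalar part $u_\al$ unchanged, so the class $[\lam_{\sfG}]$ is again well defined. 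Combining the two kinds of change shows $[\lam_{\sfG}]$ depends only on the parabolic $\sfG$-structure on $M$ and on the subgroup $L$, which is the assertion.

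The main obstacle, and the only genuinely delicate point, is case (3): one has to see that the decomposition $\al^*\om=u_\al\,a_\al\cdot\om\cdot\overline{a_\al}$ pins down $u_\al\in C^\infty(M,\RR^+)$ \emph{unambiguously} even though $a_\al$ is only determined up to the sign ambiguity $a_\al\mapsto\pm a_\al$ (and up to the conformal freedom above), and that $u_{\al\be}$ then genuinely factors as $u_\al\cdot(\al_* u_\be)$ after the $\SO(3)$-valued rotation matrices $(a_{kj})$ have been multiplied out. This is handled by passing to the norm: $\langle\al^*\om,\al^*\om\rangle = u_\al^2\,\langle\om,\om\rangle$ with respect to any fixed auxiliary Riemannian metric built from $\om$ on $\sfD$, which exhibits $u_\al$ as $\al$-equivariantly determined and independent of the $\Sp(1)$-gauge, after which the cocycle identity for $u$ reduces to the already-established one and the contribution of the $\Sp(1)$-parts cancels out of the scalar equation. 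With that in place, the formal verification in all three cases is the routine computation sketched above, and I would relegate the detailed identities to a reference to \cite{OK1} as the statement already does.
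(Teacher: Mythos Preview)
Your overall strategy coincides with the paper's, but there is a genuine computational slip in the definition of the cocycle. With the left module action $(\al_* f)(x)=f(\al^{-1}x)$ that you yourself record, the assignment $\lam_{\sfG}(\al):=u_\al$ is \emph{not} a $1$-cocycle. From $(\al\be)^*\om=\be^*\al^*\om$ one obtains
\[
u_{\al\be}(x)=u_\al(\be x)\cdot u_\be(x)=(\be^*u_\al)(x)\cdot u_\be(x),
\]
whereas the identity you claim, $\lam_{\sfG}(\al\be)=\lam_{\sfG}(\al)\cdot\al_*\lam_{\sfG}(\be)$, would require $u_{\al\be}(x)=u_\al(x)\cdot u_\be(\al^{-1}x)$. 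These do not agree, so the ``direct computation'' you defer cannot succeed. Likewise your coboundary formula $\lam'_{\sfG}(\al)=\lam_{\sfG}(\al)\cdot f\cdot(\al_* f)^{-1}$ fails for $\lam_{\sfG}(\al)=u_\al$: from $\om'=f\om$ one gets $u'_\al=(\al^*f)\,u_\al\,f^{-1}$, and $(\al^*f)\,f^{-1}\neq f\,(\al_*f)^{-1}$ in general.

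The fix, and what the paper actually does, is to twist by the group action and set $\lam_{\sfG}(\al):=\al_*u_\al$. Then the relation $u_{\al\be}=\be^*u_\al\cdot u_\be$ translates, after applying $(\al\be)_*$, into exactly $\lam_{\sfG}(\al\be)=\lam_{\sfG}(\al)\cdot\al_*\lam_{\sfG}(\be)$, and your coboundary formula becomes correct as written. With this single adjustment the rest of your outline (the norm argument to isolate $u_\al$ in case~(3), and the check of independence under $\om\mapsto u\,b\cdot\om\cdot\bar b$) matches the paper's proof.
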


\begin{proof}
Let $\al\in \Aut_{\sfG}(M)$ such that $\displaystyle \al^*g=u_\al g$
for a Riemannian metric,
$\al^*\om=u_\al \om$ for a contact form, or $\displaystyle 
\al^*\om=u_\al\,   a_\al \cdot \om \cdot\overline{a_\al}$ for a quaternionic contact form, representing the parabolic $\sfG$-structure on $M$. We construct $[\lambda_\sfG]$ for the $qc$-group $\Aut_{qc}(M)$ in place of $\Aut_{\sfG}(M)$. (But the proof holds also for $\Conf(M)$, $\Aut_{CR}(M)$, see \cite{OK1})

Let $\al, \be\in \Aut_{qc}(M)$. We write $\alpha \, \beta \in \Aut_{qc}(M)$ for the composition of the two $qc$-transformations. We calculate
\begin{equation*}\begin{split}
(\al \, \be)^*\om&=u_{\al\be}\; a_{\al\be} \cdot   \om \cdot\overline{a_{\al\be}},\\
\be^*\al^*\om&= \be^*(u_\al\,  a_\al \cdot \om\cdot \overline{a_\al})
=(\be^*u_\al\,  u_\be) (\be^*a_\al\cdot  a_\be) \cdot \om\cdot
(\overline{\be^*a_\al\cdot a_\be}). \\
\end{split}
\end{equation*}
(Note that $\displaystyle \be^* \,\overline{a_\al}\, (x)=
\overline{a_\al}\,(\be x)=\overline{a_\al(\be x)}=
\overline{\be^*a_\al}\,(x)$).
Taking the norm, we have
$$ \displaystyle ||(\al\be)^*\om||=u_{\al\be}\, ||\om||=\be^*u_\al\,  u_\be\, ||\om|| \, .$$

Thus the smooth maps $u_\al, u_\be, u_{\al\be}\in C^\infty(M,\RR^+)$ satisfy
\begin{equation}\label{crossed}
 u_{\al\be}=\be^*u_\al \, u_\be\ \ \mbox{on}\ M. 
\end{equation}

Define $\lam_{\sfG} = \lam_{\sfG,\omega}: \Aut_{\sfG}(M)\to C^\infty(M,\RR^+)$ to be
\begin{equation}\label{crossedhom}
\lam_{\sfG} (\al)=\al_*u_\al.
\end{equation}
In particular, $\displaystyle \lam_{\sfG}(\al)(x)=u_\al(\al^{-1}x)$.
We observe that $\lam_{\sfG}$ is a crossed homomorphism with respect to the representation of $\Aut_{\sfG}(M)$ on $C^\infty(M,\RR^+)$:
\begin{equation*}
\begin{split}
\lam_{\sfG}(\al\be) \, (x)&= \, (\al\be)_*u_{\al\be}\, (x)=u_{\al\be} \, (\be^{-1}\al^{-1}x)\\
 &=\be^*u_\al\, (\be^{-1}\al^{-1}x)\;  u_\be\, (\be^{-1}\al^{-1}x)\ \, (\text{by} \eqref{crossed})\\
&= \lam_{\sfG}(\al)\, (x)\; \al_*\lam_{\sfG}(\be)\, (x)
=(\lam_{\sfG}(\al)\;\al_*\lam_{\sfG}\, (\be))\, (x).
\end{split}\end{equation*}
Hence, 
$\displaystyle \lam_{\sfG}(\al\be)=\lam_{\sfG}(\al)\cdot\al_*\lam_{\sfG}(\be)$,
that is, $\lam_{\sfG}$ is a crossed homomorphism and thus a one-cocycle for the differentiable cohomology of $\Aut_{\sfG}(M)$ with coefficients in $C^\infty(M,\RR^+)$. Let 
$\displaystyle [\lam_{\sfG}]\in H^1_d(\Aut_{\sfG}(M),\,C^\infty(M,\RR^+))$ denote its corresponding cohomology class.

We show $[\lam_{\sfG}]$ is a conformal $\sfG$-invariant.
For $qc$-forms $\om, \om'$, suppose that $\om'$ is $qc$-equivalent to $\om$, that is, 
\begin{equation}\label{conG}
\om'=u \; b\cdot \om  \cdot \bar  b, \;  \; \;  \text{ for } u\in C^\infty(M,\RR^+), \;  b \in C^\infty(M,\Sp(1)) \; . 
\end{equation} 
For $\al\in \Aut_{\sfG}(M)$, write
$\displaystyle \al^*\om'=u'_\al\; a'_{\al} \cdot \om'\cdot \overline{a'_{\al}}$.
Thus  $$\lam_{\sfG, \om'} (\al)=\al_*u'_\al \, .$$
Then $\displaystyle \al^*\om'=u'_\al\;  a'_{\al} \cdot  (u \; b\cdot \om \cdot \bar b) \cdot  \overline{a'_{\al}}
=(u'_\al u)\;  (a'_{\al}\cdot b) \cdot \om\cdot (\overline{a'_{\al} \cdot b})$. Also
\begin{equation*}\begin{split}
\al^*\om'&=\al^*(u\;  b\cdot \om \cdot \bar b)
=(\al^*u\,  u_\al)\,  (\al^*b \cdot a_\al) \cdot \om\cdot
(\overline{\al^*b\cdot a_\al}).
\end{split}\end{equation*}
Taking the norm $||\al^*\om'||$, it follows
$\displaystyle u'_\al\, u=\al^*u\, u_\al$, that is, 
$$ \displaystyle u'_\al \,(\al^*u)^{-1} \,  u =   u_\al .$$
This shows
$\displaystyle \al_*u'_\al\cdot \delta^0(u)(\al)=\al_*u_\al$. 
Hence, 
$\displaystyle [\lam_{\sfG, \om'}]=[\lam_{\sfG,\om}]$ and so the cohomology class 
$[\lam_{\sfG, \omega}]$ is a quaternionic conformal invariant. 
\end{proof}

Regarding the cohomology groups of $L$ with coefficients in  $C^\infty(M,\RR^+)$ we have the following important general fact: 

\begin{theorem}[\mbox{\cite[Theorem 10]{OK1}}] \label{thm:vanish} 
Suppose that $L$ acts properly on $M$. Then $$H^i(L,  C^\infty(M,\RR^+) )= \{0 \} , \; i \geq 1 .$$
\end{theorem}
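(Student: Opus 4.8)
The plan is to exploit that a proper action of a Lie group $L$ on $M$ admits an averaging mechanism: properness guarantees the existence of slices, a partition of unity subordinate to the orbit structure, and — crucially — an invariant measure on compact pieces, so that cochains valued in the convex cone $C^\infty(M,\RR^+)$ can be "integrated out." First I would pass to the logarithm: the map $f \mapsto \log f$ is an $L$-equivariant isomorphism of $C^\infty(M,\RR^+)$ (with its multiplicative structure) onto the additive $L$-module $C^\infty(M,\RR)$, so it suffices to prove $H^i_d(L, C^\infty(M,\RR)) = \{0\}$ for $i \geq 1$. Now $C^\infty(M,\RR)$ is a Fréchet space carrying a smooth linear $L$-action, and the standard machinery of differentiable (continuous) cohomology of Lie groups applies. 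For \emph{compact} $L$ one knows $H^i_d(L, V) = 0$ for $i \geq 1$ and any smooth Fréchet module $V$, by averaging cochains over Haar measure; the content of the theorem is to bootstrap from compact stabilizers to the non-compact proper action.

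The key steps, in order: (i) reduce to the additive module via $\log$, as above; (ii) choose, using properness, an $L$-invariant smooth function $\phi \in C^\infty(M,\RR^+)$ that is "proper along orbits" together with a cutoff producing, for each point, a compactly supported averaging kernel — concretely, pick a smooth relatively compact open set $U$ meeting every orbit and a bump function $b$ supported near $U$, then set, for $x \in M$, an averaging operator that integrates a cochain's value against the pushed-around translates $\ell_* b$ over $\ell \in L$ with the (locally finite, by properness) measure cut off by $b$; (iii) verify this operator is a well-defined contracting homotopy on the differentiable cochain complex $C^\bullet_d(L, C^\infty(M,\RR))$, i.e. that the usual "Shapiro/averaging" identity $d h + h d = \mathrm{id}$ holds in degrees $\geq 1$, with the normalization that $\int_L (\ell_* b)\, d\mu = 1$ pointwise on $M$. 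Since this is \emph{exactly} the construction carried out in \cite[Theorem 10]{OK1}, I would cite it and only indicate the mechanism, as the excerpt already does.

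The main obstacle is step (ii)–(iii): constructing the contracting homotopy so that all integrals converge and depend smoothly on the cochain and the point $x \in M$. Properness of the $L$-action is precisely what makes the relevant integrals over $L$ reduce to integrals over compact subsets (the set $\{\ell \in L : \ell K \cap K' \neq \emptyset\}$ is compact for $K, K'$ compact), so the pointwise averaging is a finite/compact integral and inherits smoothness in $x$ from smoothness of $b$ and of the action map $L \times M \to M$; one must check that the resulting maps $C^n_d(L, C^\infty(M,\RR)) \to C^{n-1}_d(L, C^\infty(M,\RR))$ land in differentiable cochains, which follows from the smoothness of the $L$-action on the Fréchet space $C^\infty(M,\RR)$ together with the local finiteness just noted. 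Everything else — the algebraic homotopy identity, equivariance, and the reduction via $\log$ — is formal. Hence I would present the argument as: reduce to the additive case, invoke the averaging homotopy afforded by properness exactly as in \cite[Theorem 10]{OK1}, and conclude $H^i(L, C^\infty(M,\RR^+)) = \{0\}$ for all $i \geq 1$.
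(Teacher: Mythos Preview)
The paper does not give a proof of this theorem: it is stated purely as a citation of \cite[Theorem~10]{OK1}, with no argument reproduced. So there is no in-paper proof to compare your proposal against.

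That said, your sketch is the correct mechanism and matches what \cite{OK1} does. The reduction via $\log$ to the additive Fr\'echet $L$-module $C^\infty(M,\RR)$ is exactly right, and the vanishing then follows from an averaging (contracting-homotopy) argument using a cutoff function whose existence is guaranteed by properness of the action; this is the standard Bruhat-function construction for proper actions, and it yields a homotopy $h$ on the differentiable cochain complex with $dh + hd = \mathrm{id}$ in positive degrees. Your description of steps (ii)--(iii) is somewhat informal (the precise object one builds is a nonnegative $c\in C^\infty_c(M)$ with $\int_L c(\ell^{-1}x)\,d\ell = 1$ for all $x$, and the homotopy integrates cochains against translates of $c$), but the idea and the role of properness in making the integrals locally finite and smooth are stated correctly. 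Since the present paper only cites the result, your proposal of citing \cite[Theorem~10]{OK1} while indicating the averaging mechanism is entirely in line with the paper's treatment.
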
 \smallskip

Recall that ${\Psh}_{\sfG}(M)$ denotes the unique maximal subgroup of $\Aut_{\sfG}(M)$ that acts properly on $M$. We now prove: 

\begin{theorem}\label{th:equal}
Let $M$ be a parabolic $\sfG$-manifold of $CR$- or $qc$-type.  
Then there exists a representative form $\om$ for the  parabolic $\sfG$-structure such that 
$${\Psh}_{\sfG}(M)= {\Psh}_\sfG(M,\om) \; . $$ 
\end{theorem}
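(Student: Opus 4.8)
The strategy is to apply Theorem~\ref{thm:vanish} to the group $L = {\Psh}_{\sfG}(M)$ and then use Proposition~\ref{prop:vanish} together with the vanishing of the cohomology class $[\lam_{\sfG}]$. First I would observe that since ${\Psh}_{\sfG}(M)$ is by definition the maximal subgroup of $\Aut_{\sfG}(M)$ acting properly on $M$, it is in particular a closed subgroup of $\Aut_{\sfG}(M)$ acting properly, so Theorem~\ref{thm:vanish} applies and gives $H^1_d({\Psh}_{\sfG}(M), C^\infty(M,\RR^+)) = \{0\}$. Combined with Proposition~\ref{prop:vanish}, the restriction of the canonical class $[\lam_{\sfG}]$ to ${\Psh}_{\sfG}(M)$ therefore vanishes.

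Next I would unwind what vanishing of $[\lam_{\sfG}]$ means concretely in the $CR$- and $qc$-cases, using the explicit cocycle formula \eqref{crossedhom}. Fix any representative form $\om_0$ for the parabolic $\sfG$-structure, with associated conformal factors $u_\al$ (so $\al^*\om_0 = u_\al\, \om_0$ in the $CR$-case, or $\al^*\om_0 = u_\al\, a_\al\cdot\om_0\cdot\overline{a_\al}$ in the $qc$-case). The class $[\lam_{\sfG,\om_0}]$ restricted to $L={\Psh}_{\sfG}(M)$ being zero means the crossed homomorphism $\al \mapsto \al_* u_\al$ is a coboundary: there is a function $w\in C^\infty(M,\RR^+)$ with $\al_* u_\al = w \cdot (\al_* w)^{-1}$ for all $\al\in L$, equivalently $u_\al = (\al^* w)\, w^{-1}$. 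Now set $\om := w^{-1}\,\om_0$; this is a representative of the same $\sfG$-structure (a conformal change by the positive function $w^{-1}$). A direct computation shows that for $\al\in L$ one has $\al^*\om = \al^*(w^{-1})\,\al^*\om_0 = (\al^*w)^{-1}\, u_\al\, \om_0 = (\al^*w)^{-1}(\al^*w)\, w^{-1}\om_0 = \om$ in the $CR$-case (and $\al^*\om = a_\al\cdot\om\cdot\overline{a_\al}$ in the $qc$-case, the conformal factor having been absorbed), while the compatibility conditions on $J$, resp.\ $\{J_k\}$, are automatically inherited since $\al\in\Aut_{\sfG}(M)$. Comparing with the definitions \eqref{Pshqcr}, this says precisely $L \leq {\Psh}_{\sfG}(M,\om)$.

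Finally I would close the loop: ${\Psh}_{\sfG}(M,\om)$ always acts properly on $M$ (this is noted in the Introduction, citing \cite{OK1}), so ${\Psh}_{\sfG}(M,\om)$ is a subgroup of $\Aut_{\sfG}(M)$ acting properly, hence contained in the maximal such subgroup ${\Psh}_{\sfG}(M)$. Together with the inclusion ${\Psh}_{\sfG}(M) \leq {\Psh}_{\sfG}(M,\om)$ just established, this yields the equality ${\Psh}_{\sfG}(M) = {\Psh}_{\sfG}(M,\om)$.

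\textbf{Main obstacle.} The conceptual content is entirely contained in Theorem~\ref{thm:vanish} and Proposition~\ref{prop:vanish}, so the remaining work is bookkeeping; the one genuine point requiring care is the $qc$-case, where the cocycle was defined via \emph{norms} of the $\mathrm{Im}\,\HH$-valued forms and one must check that killing the scalar factor $u_\al$ by the conformal change $w^{-1}$ leaves behind only the $\Sp(1)$-conjugation term $a_\al\cdot(\,\cdot\,)\cdot\overline{a_\al}$, so that $\al$ genuinely lands in ${\Psh}_{qc}(M,\om)$ as defined in \eqref{Pshqcr}(3); this is exactly the computation carried out in the proof of Proposition~\ref{prop:vanish} when checking conformal invariance, so it transfers with only notational changes.
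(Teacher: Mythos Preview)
Your proposal is correct and follows essentially the same route as the paper's proof: apply Theorem~\ref{thm:vanish} to $L={\Psh}_{\sfG}(M)$ to get $[\lam_{\sfG}]=0$, unwind the coboundary equation to produce the conformal factor, rescale the representative form accordingly, and close with the properness of ${\Psh}_{\sfG}(M,\om)$. The only cosmetic differences are that the paper writes the coboundary as $\al_*u_\al=\al_*v\cdot v^{-1}$ and sets $\om=v\,\eta$ (your $w$ is their $v^{-1}$), and it invokes Lemma~\ref{lem:inv_metric} rather than the introduction for the properness of ${\Psh}_{\sfG}(M,\om)$.
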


\begin{proof} Put $L = {\Psh}_{\sfG}(M)$ for the following. 
Since ${\Psh}_{\sfG}(M)$ acts properly, Theorem \ref{thm:vanish} implies $H^1(L , C^\infty(M,\RR^+)) = \{ 0 \}$. Let $\eta$ be a representative form for the parabolic $\sfG$-structure on $M$. 
Since $H^1(L , C^\infty(M,\RR^+)) = \{ 0 \}$, in particular, $[\lam_{\sfG,\eta}]=0$, where $\lam_{\sfG,\eta}$ is a one-cocycle for $L$. 

Since $\alpha \in \Aut_{\sfG}(M)$, we can write $\al^*\eta = u_\al\,   a_\al\cdot \eta \cdot \overline{a_\al}$. 
Thus the equation  $$\lam_{\sfG,\eta}=\delta^0 v,   \text{ for some } v\in C^\infty(M,\RR^+), $$ means that $\displaystyle \al_*u_\al =\al_*v v^{-1}$, $\alpha \in L$. Or equivalently,  $\displaystyle u_\al\cdot \al^*v=v$.

Put  $\omega =v\, \eta$.
Then it follows that
$$\displaystyle \al^*\omega =\al^*v\;   u_\al\, a_\al\cdot \eta \cdot\, \overline{a_\al}
=v\;  a_\al\cdot \eta \cdot \overline{a_\al}=a_\al\cdot \omega \cdot \overline{a_\al}.$$
This shows that $\alpha \in {\Psh}_{\sfG}(M,\om)$. That is,
we have ${\Psh}_\sfG(M)$ is contained in ${\Psh}_{\sfG}(M,\om)$.
Since $\Aut_{\sfG}(M,\om)$  is acting properly, by Lemma \ref{lem:inv_metric}, $\Aut_{\sfG}(M,\om)$ 
is  contained in ${\Psh}_\sfG(M)$. The theorem is proved. 
\end{proof}

Next we note: 

\begin{lemma} \label{lem:inv_metric} 
The subgroup ${\Psh}_{qc}(M, \om)$ of $\Aut_\sfG(M)$ preserves an associated Riemannian metric $g_\om$. In particular, ${\Psh}_{qc}(M, \om)$ acts properly on $M$. 
\end{lemma}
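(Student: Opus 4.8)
The plan is to exhibit an explicit Riemannian metric $g_\om$ on $M$ canonically associated with the representative $\mathrm{Im}\,\HH$-valued contact form $\om=\om_1 i+\om_2 j+\om_3 k$, and then to check that every $\al\in{\Psh}_{qc}(M,\om)$ preserves it. Recall that the $qc$-structure determines the codimension-three distribution $\sfD=\ker\om$ together with the hypercomplex triple $\{J_k\}_{k=1}^3$ on $\sfD$, and that the Levi forms $h_k := d\om_k\circ J_k$ are positive definite on $\sfD$; moreover, the compatibility of the $J_k$ forces $h_1=h_2=h_3$ on $\sfD$, so there is a single well-defined positive definite bilinear form $h$ on $\sfD$. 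On the other hand, the $qck$-distribution $\cT=\{\xi_1,\xi_2,\xi_3\}$ complementary to $\sfD$ is characterized by $\om_k(\xi_l)=\delta_{kl}$ (the Reeb frame), and using the $\om_k$ as coframe on $\cT$ one gets a canonical positive definite form on $\cT$. Setting $g_\om := h \oplus (\om_1\cdot\om_1+\om_2\cdot\om_2+\om_3\cdot\om_3)$ with respect to the splitting $TM=\sfD\oplus\cT$ gives the desired metric.

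First I would record how an element $\al\in{\Psh}_{qc}(M,\om)$ acts on each piece. By definition of ${\Psh}_{qc}(M,\om)$ in \eqref{Pshqcr}, we have $\al^*\om = a_\al\cdot\om\cdot\overline{a_\al}$ with $a_\al\in C^\infty(M,\Sp(1))$, and $\al_*\circ J_k=\sum_j a_{kj}J_j\circ\al_*$ on $\ker\om$, where $(a_{kj})\in C^\infty(M,\SO(3))$ is the rotation induced by conjugation by $a_\al$ on $\mathrm{Im}\,\HH$. In particular $\al$ preserves $\sfD=\ker\om$ and hence also preserves $\cT$. The key point is that the conformal factor $u_\al$ is absent here (that is precisely the content of Theorem \ref{th:equal}: we passed to a form $\om$ on which ${\Psh}_{qc}(M,\om)$ acts without conformal distortion). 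So $\al^*(\sum_k \om_k\otimes\om_k)=\sum_k (\al^*\om_k)\otimes(\al^*\om_k)$; since $(\al^*\om_1,\al^*\om_2,\al^*\om_3)=(a_{kj})(\om_1,\om_2,\om_3)$ is obtained by applying the pointwise orthogonal matrix $(a_{kj})$, the sum of squares $\sum_k\om_k\otimes\om_k$ is invariant. This handles the $\cT$-part (and in fact shows invariance of the whole $\mathrm{Im}\,\HH$-valued inner product $\sum\om_k\otimes\om_k$).

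Next I would treat the $\sfD$-part. Differentiating $\al^*\om_k = \sum_j a_{kj}\om_j$ and restricting to $\sfD$ (where the $\om_j$ vanish, so the $da_{kj}\wedge\om_j$ terms drop out upon restriction of $d$ along $\sfD$) gives $\al^*(d\om_k)|_\sfD = \sum_j a_{kj}\,(d\om_j)|_\sfD$. Combining this with the intertwining relation $\al_*\circ J_k = \sum_j a_{kj}\,J_j\circ\al_*$ on $\sfD$, one computes $\al^*(d\om_k\circ J_k)|_\sfD = \sum_{j,l} a_{kj}a_{kl}\,(d\om_j\circ J_l)|_\sfD$. Using $d\om_j\circ J_l = h$ when $j=l$ and the off-diagonal terms assembling (via orthogonality of $(a_{kj})$, i.e. $\sum_k a_{kj}a_{kl}=\delta_{jl}$) to cancel, summing over $k$ yields $\al^*(\sum_k h_k)|_\sfD = (\sum_j h)|_\sfD$, i.e. $\al^*h = h$ on $\sfD$ up to the factor $3$ — equivalently, working with $h$ directly, $\al^*h=h$. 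Hence $\al^*g_\om=g_\om$, so ${\Psh}_{qc}(M,\om)\leq\Iso(M,g_\om)$, and since the isometry group of a Riemannian metric acts properly on $M$, the group ${\Psh}_{qc}(M,\om)$ acts properly.

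The main obstacle I anticipate is the bookkeeping in the $\sfD$-part: one must be careful that restricting the exterior derivative of $\al^*\om_k=\sum_j a_{kj}\om_j$ to $\sfD$ genuinely kills the $da_{kj}\wedge\om_j$ contributions (it does, since both $\om_j$ and the relevant slots vanish on $\sfD$, but this should be stated cleanly using that $\sfD$ is the common kernel), and that the cross terms $d\om_j\circ J_l$ for $j\neq l$ conspire with the $\SO(3)$-orthogonality relations to leave only the invariant $h$. One should also note at the outset the standard but essential compatibility fact that $d\om_k\circ J_k$ is independent of $k$ on $\sfD$ (this is part of the definition of a positive definite $qc$-structure as recalled in Section \ref{sec:parabolic_geom}); without it the form $g_\om$ on $\sfD$ would not be well defined. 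Everything else is a direct, if slightly lengthy, tensorial verification.
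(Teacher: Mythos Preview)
Your approach coincides with the paper's: its entire proof is the single displayed formula
\[
g_\om(\mbi{x},\mbi{y})=\sum_{i=1}^{3}\om_i(\mbi{x})\cdot \om_i(\mbi{y})+
d\om_1(J_1 \mbi{x}, \mbi{y}),
\]
which is precisely your $g_\om$ (using $h=d\om_1\circ J_1$ on $\sfD$), with the invariance left implicit. Your verification goes beyond what the paper writes; the only delicate point is the one you already flag, namely that in the $\sfD$-computation the pairing of indices must come out as $\sum_k a_{kj}a_{kl}=\delta_{jl}$ rather than $\sum_k a_{kj}a_{lk}=(A^2)_{jl}$, and this hinges on the precise convention relating the $\SO(3)$-matrix in $\al^*\om_k=\sum_j a_{kj}\om_j$ to the one in the $J$-intertwining relation (they must be mutual transposes for your summation to close up), so that step should be pinned down carefully.
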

\begin{proof}
\begin{equation*}
	g_\om(\mbi{x},\mbi{y})=\sum_{i=1}^{3}\om_i(\mbi{x})\cdot \om_i(\mbi{y})+
d\om_1(J_1 \mbi{x}, \mbi{y}), \, \text{ where } \mbi{x}, \mbi{y}\in TM. \qedhere \end{equation*}
\end{proof}

\begin{corollary}\label{eq:Pcomp}
When $X/\Gamma$ is a closed aspherical $\sfG$-manifold,
$\Aut_{\sfG}(X/\Gamma)$ is a compact Lie group. In particular, $[\lam_{\sfG}]=0$.
\end{corollary}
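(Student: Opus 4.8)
The plan is to reduce Corollary~\ref{eq:Pcomp} to Theorem~\ref{th:equal} together with Theorem~\ref{thm:proper}(iii) and the compactness of isometry groups of closed manifolds. First I would observe that a closed aspherical $\sfG$-manifold $X/\Gamma$ is in particular compact, so by Theorem~B the group $\Iso(X/\Gamma, h)$ of any Riemannian metric $h$ is a compact Lie group; more to the point, the full automorphism group $\Aut_{\sfG}(X/\Gamma)$ is a Lie group acting effectively and smoothly on the compact manifold $X/\Gamma$, and I want to argue it is compact. The key structural input is that $\Aut_{\sfG}(X/\Gamma)$ acts \emph{properly} on $X/\Gamma$: indeed, by Theorem~A, if $\Aut_{\sfG}(M)$ does not act properly on a parabolic $\sfG$-manifold $M$ then $M$ is structure-preservingly diffeomorphic to one of the standard model spheres $S^{|\KK|(n+1)-1}$ or to $\RR^n$, $\cN$, $\cM$ --- but none of these is a closed aspherical manifold ($S^n$ is not aspherical; $\RR^n,\cN,\cM$ are contractible but not compact). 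Hence for $M = X/\Gamma$ closed aspherical, $\Aut_{\sfG}(M)$ must act properly on $M$.

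Once properness is in hand, the conclusion follows quickly. A proper action of a Lie group $G$ on a \emph{compact} manifold $M$ forces $G$ to be compact: the orbit map at any point, composed with properness, shows that the preimage of the compact set $M\times M$ under $(g,x)\mapsto (gx,x)$ is compact, and projecting to $G$ (after fixing $x$ and using that $M$ is compact) bounds $G$; more cleanly, in the $CR$- or $qc$-case Theorem~\ref{th:equal} gives a representative form $\om$ with $\Psh_{\sfG}(M)=\Psh_{\sfG}(M,\om)$, and by Lemma~\ref{lem:inv_metric} (and its conformal/$CR$ analogues) $\Psh_{\sfG}(M,\om)$ preserves an associated Riemannian metric $g_\om$; since $\Aut_{\sfG}(M)=\Psh_{\sfG}(M)$ by properness, $\Aut_{\sfG}(M)\leq \Iso(M,g_\om)$, which is compact because $M$ is compact. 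In the conformal case~(1) the analogous statement is that a proper $\Conf(M)$ preserves a metric in the conformal class, so again $\Aut_{\sfG}(M)$ embeds in an isometry group of a closed manifold, hence is compact. Finally, $[\lam_{\sfG}]=0$ is then immediate from Theorem~\ref{thm:proper}(iii): $\Aut_{\sfG}(M)$ acts properly, so its canonical class vanishes; alternatively it is visible directly, since once $\Aut_{\sfG}(M)=\Psh_{\sfG}(M,\om)$ every $u_\al\equiv 1$ and the cocycle $\lam_{\sfG}$ is trivial.

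The main obstacle --- or rather the one point that needs care --- is the dichotomy step: one must verify that \emph{none} of the exceptional model spaces in Theorem~A is a closed aspherical manifold. This is elementary ($S^{|\KK|(n+1)-1}$ has nontrivial higher homotopy and is not a $K(\pi,1)$; the groups $\RR^n,\cN,\cM$ are contractible but non-compact, so a discrete uniform $\Gamma$ cannot produce a model-sphere quotient --- indeed their quotients are already covered under the \emph{proper} side of the dichotomy), but it is the logical hinge of the argument and should be stated explicitly. After that, everything is a formal consequence of Theorems~\ref{th:equal} and~\ref{thm:proper} and Lemma~\ref{lem:inv_metric}, plus the classical fact that the isometry group of a closed Riemannian manifold is compact.
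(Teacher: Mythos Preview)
Your proposal is correct and follows essentially the same route as the paper's proof: invoke Theorem~A to conclude that $\Aut_{\sfG}(X/\Gamma)$ acts properly (since a closed aspherical manifold cannot be one of the listed model spaces), and then deduce compactness and $[\lambda_{\sfG}]=0$ from the surrounding results. The paper's own proof is in fact a single sentence appealing to Theorem~A; you have simply spelled out the implicit steps (the dichotomy check, the passage from properness to compactness via Theorem~\ref{th:equal} and Lemma~\ref{lem:inv_metric}, and the vanishing of the class via Theorem~\ref{thm:proper}(iii)), which is entirely appropriate.
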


\begin{proof}
By Theorem A, the automorphism group of any compact aspherical parabolic $\sfG$-manifold is acting properly. 
\end{proof}

\begin{remark}
Note that, as $\Aut_{CR}(\cN)=\cN\rtimes {\rm U}(n)\times \RR^+$, is not acting properly on the Heisenberg group $\cN$, 
$[\lam_{CR}]\neq 0$ in $H^1_d(\Aut_{CR}(\cN),C^\infty(\cN,\RR^+))$.
 On the other hand, if $X=\cN-\{\mbi{0}\}$, then 
$\Aut_{CR}(X)={\rm U}(n)\times \RR^+$ which acts properly on $X$.
Then $[\lam_{CR}]\in H^1_d({\rm U}(n)\times \RR^+, C^\infty(X,\RR^+))= \{0\} $.
The quotient of $X$ by an infinite discrete subgroup of\/  
${\rm U}(n)\times \RR^+$
is an infra-Hopf manifold.
\end{remark}

\section{$CR$-manifolds $X/\pi$ with virtually solvable group $\pi$}  
 \label{sec:CRsolv}

\subsection{Sasaki manifolds and Reeb flow} \label{sec:sasaki} 
A Sasaki structure on $M$ is equivalent with a \emph{standard} 
$CR$-structure. For any $CR$-structure  $(\om, J)$, the Reeb field $\xi$ is defined by the conditions $$ \om(\xi) = 1 \text{ and } d\omega(\xi, \cdot) = 0 . $$  A $CR$-structure is called standard if the flow of the  Reeb field is contained in the pseudo-Hermitian group $\Psh_{CR}(M, \omega)$. Then the  metric $$ g=\om\cdot\om+d\om\circ J$$ is called \emph{Sasaki metric}. 
The group
$\Psh_{CR}(M,\om)$ is thus contained in the isometry group of the Sasaki metric $g$. Note further that the Reeb flow is contained in the center of $\Psh_{CR}(M,\om)$ (compare 
\cite{OK}). 
\smallskip

\paragraph{\em The Reeb field generates $S^1$ on compact manifolds.} 
On a compact $CR$-manifold the Reeb flow always gives rise to a circle action. 

\begin{pro}\label{Reebflow}
Let $(M, \om, J)$ be a closed strictly pseudoconvex
standard $CR$-manifold. Then the Reeb field generates an $S^1$-action on $M$. 
\end{pro}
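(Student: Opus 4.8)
The plan is to exploit the fact that on a compact standard $CR$-manifold the Reeb flow $\{\varphi_t\}$ lies in the \emph{isometry} group of the Sasaki metric $g = \om\cdot\om + d\om\circ J$. Indeed, by the definition of "standard" the Reeb flow is contained in $\Psh_{CR}(M,\om)$, and by Lemma \ref{lem:inv_metric} (or directly, since $\varphi_t$ preserves $\om$, $J$, and hence $d\om\circ J$) each $\varphi_t$ is an isometry of $g$. Since $M$ is compact, $\Isom(M,g)$ is a compact Lie group, so the closure $\overline{\{\varphi_t : t\in\RR\}}$ in $\Isom(M,g)$ is a compact connected abelian Lie group, i.e. a torus $T^m$ for some $m\geq 1$ (note $m\geq 1$ because the Reeb field is nowhere zero, so the flow is non-trivial). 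If $m=1$ we are done, so the task is to rule out $m\geq 2$.

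The key step is therefore to show that the $T^m$-action generated by the Reeb flow has one-dimensional orbits everywhere. First I would observe that at each point $x\in M$ the orbit of $T^m$ through $x$ is an immersed torus of dimension at most $m$, and its tangent space at $x$ contains $\xi_x \neq 0$. The crucial point is that every Killing field $Y$ in the Lie algebra $\mathfrak{t}$ of $T^m$ arises as a limit of reparametrizations of $\xi$, hence commutes with $\xi$ and (being a limit of elements preserving $\om$ and $J$) also preserves $\om$ and $J$; thus $Y$ is an infinitesimal automorphism of the contact structure with $\mathcal{L}_Y\om = 0$. Contracting $d\om$ with $Y$ and using $\mathcal{L}_Y\om = d\iota_Y\om + \iota_Y d\om = 0$ shows $\iota_Y d\om = -d(\om(Y))$. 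Evaluating against $\xi$ gives $0 = d\om(Y,\xi) = -d(\om(Y))(\xi) = -\xi(\om(Y))$; but more usefully, pairing with an arbitrary vector and using that $\xi$ is central in $\Psh_{CR}(M,\om)$ forces $\om(Y)$ to be a constant $c_Y$, and then $\iota_Y d\om = 0$ together with $\om(Y)=c_Y$ characterizes $Y = c_Y\,\xi$ by the non-degeneracy of $d\om$ on $\ker\om$ and the definition of the Reeb field. Hence $\mathfrak{t} = \RR\xi$ is one-dimensional, so $m = 1$ and $T^m = S^1$.

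The main obstacle I expect is making rigorous the claim that the full Lie algebra $\mathfrak{t}$ of the closure torus consists of multiples of $\xi$: a priori the closure could contain Killing fields that are only limits of the flow and need not literally preserve $\om$ and $J$ on the nose. This is handled by noting that $\om$ and $J$ are $\varphi_t$-invariant for all $t$, so the closed subgroup of $\Isom(M,g)$ preserving $\om$ and $J$ — which is exactly $\Psh_{CR}(M,\om)\cap\Isom(M,g) = \Psh_{CR}(M,\om)$ — is closed and contains the flow, hence contains its closure; therefore $T^m\leq\Psh_{CR}(M,\om)$ and every element of $\mathfrak{t}$ genuinely satisfies $\mathcal{L}_Y\om = 0$ and $\mathcal{L}_Y J = 0$. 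With that in hand, the computation of the previous paragraph applies verbatim. An alternative, slightly softer finish: $\xi$ lies in the center of the Lie algebra of $\Psh_{CR}(M,\om)$, so $\mathfrak{t}$, being abelian and containing $\xi$, injects into the space of $\Psh_{CR}(M,\om)$-invariant sections satisfying the Reeb-type equations, and the defining equations $\om(Y) = \text{const}$, $\iota_Y d\om = 0$ cut this down to $\RR\xi$; this gives $\dim T^m = 1$ and completes the proof.
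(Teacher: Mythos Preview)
Your overall strategy matches the paper's: the Reeb flow sits inside the compact group $\Psh_{CR}(M,\om)$ (via the Sasaki metric), so its closure is a torus $T^m$, and one must show $m=1$ by proving that the Lie algebra $\mathfrak t$ of $T^m$ reduces to $\RR\xi$. The divergence is in \emph{how} this reduction is carried out.

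Your argument has a genuine gap at the claim that $\om(Y)$ is \emph{constant} for every $Y\in\mathfrak t$. From $\mathcal L_Y\om=0$ you correctly obtain $\iota_Y d\om=-d(\om(Y))$, and centrality of $\xi$ (or simply $\iota_\xi d\om=0$) gives $\xi(\om(Y))=0$, i.e.\ $\om(Y)$ is constant along Reeb orbits --- equivalently, constant on each $T^m$-orbit since the Reeb flow is dense in $T^m$. But nothing you have written forces $\om(Y)$ to be constant \emph{transversally} to the orbits; $\om(Y)$ is the contact moment map for $Y$, and for a general Killing field in $\Psh_{CR}(M,\om)$ it is typically non-constant. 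Without global constancy you cannot conclude $\iota_Y d\om=0$, and the characterization $Y=c_Y\,\xi$ collapses. Your ``alternative finish'' has the same unproved hypothesis $\om(Y)=\text{const}$ built in.

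The paper avoids this issue by bringing in $J$ and the positive-definite Levi form rather than attempting to prove constancy. Working with a fundamental vector field $\mbi u$ for $T^k$ lying in $\ker\om$, the paper uses that the flow of $\mbi u$ is holomorphic on $\sfD$ to compute $[\mbi u, J\mbi u]=J[\mbi u,\mbi u]=0$; then the Cartan formula for $d\om$, together with $\om(\mbi u)=\om(J\mbi u)=0$, yields $d\om(\mbi u,J\mbi u)=0$, and strict pseudoconvexity forces $\mbi u=0$. In short, the complex structure $J$ --- which you invoke only implicitly, through the Sasaki metric --- is the piece of structure the paper leans on and your argument is missing.
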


\begin{proof} By the definition of standard $CR$-structure,
the Reeb field $\xi$ generates a one-parameter group ${\sfA}$ of $CR$-transformations for $(\ker\, \om,J)$. 
Let $\Psh(M,(\om,J))$ be the pseudo-Hermitian group.
Since $\Psh(M,(\om,J))\leq \Iso(M,g)$ for the Sasaki metric $g=\om\cdot\om+d\om\circ J$,
$\Psh(M,(\om,J))$ is compact.
If $\displaystyle\bar {\sfA}$ is the closure of ${\sfA}$ in $\Psh(M,(\om,J))$, then 
$\bar A$ is isomorphic to a $k$-torus $T^k$.
Let $\cT^k$ be the distribution of vector fields for $T^k$ on $M$.
Consider the restriction $\om|_{\cT^k} : \cT^k\ra \RR$. 
Then $\cT^k=\langle \xi\rangle\oplus \ker\, (\om|_{\cT^k})$.
(For this, recall $\om(\xi) = 1$, $d\omega(\xi, \cdot) = 0$.) 
Let $\mbi{u}\in \ker\, (\om|_{\cT^k})$.  Since $\ker\, \om$ is $J$-invariant, $J\mbi{u}\in\ker\, \om$.
As $\mbi{u}\in \cT^k$, $\mbi{u}$ generates a one-parameter group of transformations $\{\varphi_t\}_{t\in \RR}$  holomorphic on $\sfD$. Putting  $p_{-t}=\varphi_{-t}p$,  for a point $p\in M$,
note that $\displaystyle [\mbi{u},J\mbi{u}]=\lim_{t\ra 0}({\varphi_t}_*J\mbi{u}_{p_{-t}}-J\mbi{u}_p)/t=
J\lim_{t\ra 0}({\varphi_t}_*\mbi{u}_{p_{-t}}-\mbi{u}_p)/t=J[\mbi{u},\mbi{u}]=\mbi{0}$.
Let $d\om\circ J$ be the positive definite Levi form 
on $\ker\, \om$, then $\displaystyle 2d\om(\mbi{u},J\mbi{u})=\mbi{u}\om(J\mbi{u})-J\mbi{u}\om(\mbi{u})-\om([\mbi{u},J\mbi{u}])=0$.
Thus $\mbi{u}=\mbi{0}$. It follows ${\sfA}=S^1$.
\end{proof}

\subsection{Aspherical Sasaki manifolds} \label{sec:aspherical_Sasaki}
Now let  $M=X/\pi$ be a  $2n+1$-dimensional closed aspherical manifold with a standard $CR$-structure. Since $M$ is compact, the Reeb field $\xi$ generates an $S^1$-action on $M$ (Proposition \ref{Reebflow}).
Then it follows (\cf \cite{OK}) that
\begin{enumerate}
\item A Sasaki structure $(\om,J)$ induces a K\"ahler structure $(\Om, J)$ on
$W$ such that $d\om=p^*\Om$
and $J$ is an induced complex structure from $(\ker\, \om,J)$ with $p_*J=Jp_*$.
\item The central group extension $\displaystyle 
1\to \RR\cap \pi\to \pi\stackrel{\phi}\lra Q\to  1$ embeds into the pseudo-Hermitian group as in the diagram
\begin{equation}\label{eq:g}\begin{CD}
1@>>>\RR@>>> \Psh(X)@>\phi>> \Iso_h(W)@>>> 1\\
@. @AAA @AAA  @AAA @.\\
1@>>>\RR\cap \pi@>>> \pi @>\phi>> Q@>>> 1
\end{CD}
\end{equation}where the quotient group $Q=\pi \big/\, \RR\cap \pi$ acts effectively and properly discontinuously on $W$ 
as a group of K\"ahler isometries. 
\end{enumerate}
It follows from diagram \eqref{eq:g} that $Q$ acts effectively and properly discontinuously on $W$. \smallskip

\subsection{$S^1$-action on a closed aspherical manifold} \label{sec:circle_action}
Let $M=X/\pi$ be a closed  aspherical manifold with an effective $S^1$-action. Recall (\cf Section \ref{sec:compact_auto}, proof of Theorem \ref{thm:1,2,3}) that, for any $x\in M$, the orbit map ${\rm ev}_x : S^1\ra M$ defined by ${\rm ev}_x(t)=t \cdot x$,  
induces an \emph{injective} homomorphism $${\rm ev}_*:\pi_1(S^1,1)=\ZZ \to  \pi_1(M,x)=\pi ,$$ 
such that ${\rm ev}_*(\ZZ)\leq Z(\pi)$ is contained in the center of $\pi$. This implies that the $S^1$-action  lifts to a proper action of $\RR$ on $X$, where $\pi$ commutes with $\RR$. Since $S^1 = \RR/\ZZ$ and the action is effective, we thus have an equivariant principal bundle on the universal cover $X$ of the form
\begin{equation}\label{eq:prin}\begin{CD}
(\ZZ = \RR\cap \pi, \RR)@>>> (\pi,X)@>>> (Q = \pi\big/ \RR\cap \pi,W = X /\RR) \; .
\end{CD}
\end{equation}
\vskip0.2cm

\paragraph{\em Associated principal bundle} 
 We suppose now that $Q$ admits a torsion-free subgroup of finite index. 
Thus we may choose a torsion-free normal subgroup $Q'$ of finite index in $Q$, such that $W/Q'$ is a closed aspherical manifold. 
We put  $\pi' = \phi^{-1}(Q')$ for the preimage of $Q'$ in $\pi$. Then the  central group extension  
\begin{equation}\label{eq:principale}
\begin{CD}
1@>>> \ZZ = \RR \cap \pi@>>>\pi' @>>> Q' @>>> 1 
\end{CD}\end{equation}
\vspace{0.5ex}
gives rise to a principal circle bundle 
\begin{equation}\label{eq:principalb}
\begin{CD}
 S^1 = \RR/\ZZ  @>>> P = X/\pi' @>p>> B = W/ Q' \ .  
\end{CD}\end{equation}

\subsection{Standard $CR$-structures on circle bundles} \label{sec:standard_circle} 
As in Section \ref{sec:aspherical_Sasaki}, we now suppose that the contractible manifold $X$ has a standard $CR$-structure 
$(\om, J)$. We require further that the Reeb flow generates the principal $\RR$-action in \eqref{eq:principale}. Also we assume  that  the $CR$-structure is preserved by $\pi$, that is,  $\pi \leq \Psh(X,\om)$. And we impose that $\pi \cap \RR = \ZZ$. This  ensures that the principal action of $\RR$ on $X$ descends to an \emph{effective} action of $S^1 = \RR\big/ \ZZ$ on $X/\pi$, as in \eqref{eq:principalb}. \smallskip

Since $\pi \leq \Psh(X,\om)$,  the principal $S^1$-bundle \eqref{eq:principalb} with total space  $$P =X/\pi'$$ inherits  a compatible induced  standard $CR$-structure $(\bar \om, J)$. 
The Reeb field $\xi$ pushes down to the Reeb field $\bar \xi$ on $P$. Since the action of $S^1 = \RR / \ZZ$ is effective, $\bar \xi$ is also the fundamental  vector field of the principal $S^1$-action on $P$. This fact implies that the induced contact form $\bar \omega$ is in fact a \emph{connection form} for the principal circle bundle $P$ in \eqref{eq:principalb}. \smallskip

Furthermore, $d\omega$ is the curvature form of the connection $\bar \om$ and satisfies  $$d \omega = p^* \bar \Omega \, , $$ for a closed form $\bar \Omega$ on $B$. Since it arises as the curvature of the connection form, it follows that the cohomology class of $\bar \Om$ is integral, and that 
$$   e(B) = \, [ \,  \,\bar \Omega \, ]  \; \in H^2(B, \ZZ) $$ 
is the characteristic class of the bundle \eqref{eq:principalb} (\cf \cite{Ko} or \cite[Section 2.2]{Bl}). \smallskip 

Since we also have a $CR$-structure, $\bar \Omega$ is its  associated K\"ahler form on $B$. \smallskip

\paragraph{\em Finite group action on  $P$}
Since the group $\pi$ is contained in $\Psh(X,\om)$, $\pi$ centralizes the $\RR$-action on $X$, since it  arises from the Reeb flow. Therefore $\pi$ acts on $P$ by bundle automorphisms with respect to \eqref{eq:principalb}. Furthermore, the action of $\pi$ on $P$ descends to the group $$\mu = Q/ Q' \, , $$ 
which is acting on $B= W/Q'$ by K\"ahler isometries. That is, $\mu$ preserves the K\"ahler form on $B$, and, in particular, it fixes the K\"ahler class $e(B)$. \smallskip

We further note: 

\begin{lemma}\label{eff}
The holomorphic action of the finite quotient group $\mu$ on $B$ is effective, that is, the homomorphism 
$\displaystyle \mu \to  {\rm hol}(B)$ is injective.
\end{lemma}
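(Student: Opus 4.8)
The plan is to deduce injectivity of $\mu \to \mathrm{hol}(B)$ from the faithfulness of the $\pi$-action on $P = X/\pi'$ together with the structure of the principal bundle \eqref{eq:principalb}. Write $K$ for the kernel of $\mu \to \mathrm{hol}(B)$, and let $\hat K \leq \pi$ be its preimage (so that $\hat K \supseteq \pi'$ and $\hat K/\pi' = K$). An element of $\hat K$ acts on $P$ by a bundle automorphism covering the identity of $B$; since $\bar\omega$ is $\pi$-invariant and is a connection form, and since the bundle \eqref{eq:principalb} is a principal $S^1$-bundle, such an automorphism is necessarily given by fibrewise translation by a locally constant, hence globally constant (as $B$ is connected), element of $S^1 = \RR/\ZZ$. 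Thus the action of $\hat K$ on $P$ factors through a homomorphism $\chi : \hat K \to S^1$ with $\pi' \leq \ker \chi$, and $\ker\chi$ acts trivially on $P$.

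First I would make precise why a connection-preserving bundle automorphism covering $\mathrm{id}_B$ is a constant vertical translation: the automorphism commutes with the principal $S^1$-action (as $\pi$ centralizes the Reeb flow, hence the $S^1$-action on $P$), so on each fibre it is right translation by some $s(b) \in S^1$; preserving the connection form $\bar\omega$ forces $s$ to be horizontal-constant, and $B$ connected then gives $s$ constant. Next, lifting this back to $X$: the element of $\pi$ acts on $X$ commuting with the Reeb $\RR$-action, and covering the identity on $P$ means it differs from an element of $\RR$ by an element of $\pi'$; combined with $\pi \cap \RR = \ZZ \leq \pi'$, this shows any $\gamma \in \ker\chi$ acts on $X$ by an element of $\pi'$ times the identity — more carefully, $\gamma$ acting trivially on $P = X/\pi'$ means $\gamma \in \pi'$. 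Hence $\ker\chi = \pi'$, so $\chi$ is injective on $\hat K/\pi' = K$; thus $K$ embeds into $S^1$.

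Finally I would use that $K$ is finite (being a subgroup of $\mu = Q/Q'$, which is finite) together with the fact that $K$ acts on the aspherical manifold $P/\text{(rest of }\mu) $ — more directly, that $\hat K$ is torsion-free (it is a subgroup of $\pi$, the fundamental group of an aspherical manifold). A nontrivial element $\gamma \in \hat K$ mapping to a nontrivial element of $K \hookrightarrow S^1$ would, via $\chi$, be an element of infinite order or a torsion element; but if $\gamma \notin \pi'$ then $\chi(\gamma) \neq 0$ yet $\gamma$ has some power in $\pi' = \ker\chi$ (since $K$ is finite), so $\chi(\gamma)$ is a nontrivial torsion element of $S^1$, which is fine in $S^1$ but means $\gamma$ itself is sent to a finite-order element — and here the key point is that $\gamma$ then acts on $X$ as a nontrivial finite-order element of the central $\RR \times \{1\}$ extension, contradicting torsion-freeness of $\pi$. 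Concretely: $\gamma^m \in \pi'$ for $m = |K|$, and the induced map shows $\gamma^m$ and $\gamma$ project to the same bundle automorphism data up to the $S^1$-part, forcing $\gamma$ to lie in the subgroup generated by $\pi'$ and the central $\ZZ$, on which the only torsion is trivial; hence $K = \{1\}$.

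The main obstacle I anticipate is the bookkeeping in the middle step: identifying the action of the kernel subgroup on the total space $P$ precisely as constant vertical translation and then pulling this statement back to an honest statement about $\pi' \leq \pi$ on $X$, using the three standing hypotheses $\pi \leq \Psh(X,\omega)$, the Reeb flow generating the $\RR$-action, and $\pi \cap \RR = \ZZ$. Once that identification is clean, finiteness of $\mu$ plus torsion-freeness of $\pi$ closes the argument immediately.
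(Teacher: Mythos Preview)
Your approach is far more elaborate than necessary, and the final step has a genuine gap. The paper's proof is a two-line covering-space observation that uses nothing about $P$, the connection form, or the $CR$-structure: $Q$ acts effectively on $W$, and $W \to B = W/Q'$ is a regular covering with deck group $Q'$; hence any $q \in Q$ acting trivially on $B$ is a deck transformation of this covering and so lies in $Q'$. This shows at once that the kernel of $Q \to \mathrm{hol}(B)$ is exactly $Q'$, i.e.\ $\mu = Q/Q' \to \mathrm{hol}(B)$ is injective.

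Your route through the total space $P$, the connection form $\bar\omega$, and the homomorphism $\chi:\hat K \to S^1$ is correct up to the conclusion $\ker\chi = \pi'$ and $K \hookrightarrow S^1$. But the third paragraph does not close the argument: that $\chi(\gamma)$ is a torsion element of $S^1$ in no way forces $\gamma \in \pi$ to have finite order, and the assertion ``forcing $\gamma$ to lie in the subgroup generated by $\pi'$ and the central $\ZZ$'' is not justified by anything you have written. Torsion-freeness of $\pi$ alone does not preclude a finite quotient of $\pi$ from mapping nontrivially into $S^1$.

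In fact your own ingredients already finish the job without any torsion detour. For \emph{any} $\gamma \in \hat K$ (not only those in $\ker\chi$), lift the constant fibre translation by $\chi(\gamma)$ to some $r \in \RR$ acting on $X$; then $r^{-1}\gamma$ acts trivially on $P = X/\pi'$ and hence lies in $\pi'$, so $r = \gamma\cdot(\text{element of }\pi')^{-1} \in \pi \cap \RR = \ZZ$, whence $\chi(\gamma) = 0$. Thus $\chi \equiv 0$ and $K = \{1\}$. But observe that this is precisely the paper's deck-transformation argument, carried out one level up (on the covering $X \to P$ instead of $W \to B$) and dressed with bundle language; the connection form and the Reeb flow play no essential role.
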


\begin{proof}
By our construction, $Q$ is normalizing $Q'$, and it has an effective action on $W$. Since $W \to W/ Q'$ is a covering map, any element of $Q$ that acts trivially on $W/Q'$ is a lift of the identity of $W/Q'$ with respect to this  covering. Therefore, it must be in $Q'$, showing that $\displaystyle \mu \to  {\rm hol}(B)$ is injective.
\end{proof}

\subsection{Biholomorphism between $W$ and $\CC^n$}\label{sec:biho}

By  \cite[Theorem 2.1]{BC}, the aspherical K\"ahler  manifold $W/Q'$  is biholomorphic to
a complex euclidean space form $\CC^n/\rho(Q')$, 
where $\rho: Q'\to  E_{\CC}(n)=\CC^n\rtimes {\rm  U}(n)$
is a faithful representation. Note that $\rho(Q')$ 
is a Bieberbach group. Therefore, $\Lambda =\CC^n\cap \rho(Q')$ is a maximal free abelian normal subgroup of $\rho(Q')$ and of finite index in $\rho(Q')$. Let $$ \displaystyle H: \;  W \to  \CC^n$$ be a corresponding biholomorphism equivariant with respect to $\rho$. From this, we see that (going down to a finite index subgroup if necessary)
we may choose $Q'$ such that $\rho(Q') = \Lambda$ is contained in  $\CC^n$. Then $X/Q'$ is a complex torus biholomorphic to $$ T^n_\CC = \CC^n / \Lambda . $$
Moreover, we have a covering action:
 \begin{equation}\label{eq:cov}\begin{CD}
\Lambda @>>> (Q^H,\CC^n)@>>>(Q^H/\Lambda,T^n_\CC)
\end{CD}\end{equation} where 
$Q^H$ induces a holomorphic action of $
Q^H/\Lambda$ on $T^n_\CC$. That is,
there is a homomorphism $\displaystyle \theta: Q^H/\Lambda\to  {\rm hol}(T^n_\CC)$. Every biholomorphic map of a complex torus $T^n_\CC$ is induced by a complex affine transformation of the vector space $\CC^n$, see \cite{GH}. Thus it follows that $Q^H$ is contained in $ E_{\CC}(n)$. In particular,  $\rho$ extends to a homomorphism $\rho:Q \to  E_{\CC}(n)$ inducing $\theta$. \smallskip

Since $\rho(Q)$ is a crystallographic group, we may, in addition,  arrange things such that $\rho(Q')$ equals $ \Lambda$: 
$$ \rho(Q') = \Lambda = \rho(Q) \cap \CC^n .$$ 
Then, the finite group $Q^H/\Lambda$ maps injectively to  ${\rm U}(n)$,  so that we have
\begin{equation}\label{eq:emfini}
\mu =Q^H/\Lambda\leq {\rm U}(n).
\end{equation} 

\paragraph{\em Compatible choice of associated linear Hermitian form} Consider now the characteristic class $e(B)$ of the circle bundle \eqref{eq:principalb}. By the biholomorphism $B\to T^n_\CC$, $e(B)$ is transported to a class $$e(T^n_\CC) = e(B)^H \in H^2(T^n_\CC, \ZZ) .$$ Moreover, since $e(B)$ is a K\"ahler class, $e(T^n_\CC)$ is contained in the K\"ahler cone of $T^n_\CC$. 

\begin{lemma} \label{lem:linear_class}
There exists a positive definite linear Hermitian two form $\Omega_{\CC^n}$ $($of type $(1,1))$ on 
$\CC^n$ such that its image $\bar \Omega_\CC$ on $T^n_\CC$ represents the  characteristic class $e(T^n_\CC)$. Then,  we have, 
\begin{equation} \label{eq:e}
e(T^n_\CC) =  
[\, \bar \Omega_{\CC^n} \, ] \in H^2(T^n_\CC, \ZZ) \, . \end{equation}
\end{lemma}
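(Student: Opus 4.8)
The plan is to reduce the statement to a standard fact about harmonic representatives of integral $(1,1)$-classes on a complex torus. First I would recall that every cohomology class in $H^2(T^n_\CC, \ZZ)$ has a unique translation-invariant (i.e.\ constant-coefficient) representative, obtained by averaging over the torus; since $T^n_\CC = \CC^n/\Lambda$, such a form is the descent of a constant two-form on $\CC^n$, that is, an element of $\Lambda^2(\CC^n)^*$. Applying this to $e(T^n_\CC)$ produces a constant two-form $\bar\Omega_{\CC^n}$ on $T^n_\CC$ with $[\,\bar\Omega_{\CC^n}\,] = e(T^n_\CC)$, which is \eqref{eq:e}. It remains to argue that this constant form is of type $(1,1)$ and positive definite, i.e.\ a linear Hermitian form on $\CC^n$.

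For the type, I would invoke that $e(B)$ is a K\"ahler class on $B$ (it is the class of the curvature form $\bar\Omega$ of the connection, which is a K\"ahler form by the discussion in Section \ref{sec:standard_circle}), hence $e(T^n_\CC)$, being the image of $e(B)$ under the biholomorphism $B \to T^n_\CC$, lies in $H^{1,1}(T^n_\CC)\cap H^2(T^n_\CC,\ZZ)$ and in fact in the K\"ahler cone (as already noted just before the statement of the lemma). Since the Hodge decomposition of $H^2(T^n_\CC,\CC)$ is compatible with taking translation-invariant representatives — the constant forms of type $(p,q)$ represent exactly $H^{p,q}$ — the constant representative $\bar\Omega_{\CC^n}$ is automatically of type $(1,1)$.

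For positivity, I would use that $e(T^n_\CC)$ lies in the K\"ahler cone: a K\"ahler class on a compact K\"ahler manifold contains a K\"ahler (positive) form, and on a complex torus the translation-averaging procedure sends positive $(1,1)$-forms to positive constant $(1,1)$-forms (averaging preserves positivity pointwise, and a translation-invariant positive form is positive everywhere). Hence $\bar\Omega_{\CC^n}$ is a positive definite constant $(1,1)$-form, which is precisely the datum of a positive definite linear Hermitian form $\Omega_{\CC^n}$ on $\CC^n$ whose descent is $\bar\Omega_{\CC^n}$. This gives the claimed form and the identity \eqref{eq:e}.

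I expect the only genuine subtlety — hardly an obstacle, but the point requiring care — is the interplay of the three notions ``translation-invariant representative'', ``type $(1,1)$'', and ``positive'': one must make sure that a single constant two-form can be chosen to simultaneously represent $e(T^n_\CC)$ (forced, by uniqueness of the invariant representative) and to be positive of type $(1,1)$ (forced, since the invariant representative of a K\"ahler class is itself a K\"ahler form on the torus). Everything else is the standard linear algebra dictionary between constant $(1,1)$-forms on $\CC^n$ and Hermitian forms on $\CC^n$, for which I would simply cite \cite{GH} or \cite{BG}.
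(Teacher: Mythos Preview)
Your argument is correct and considerably more elementary than the paper's. The paper chooses a K\"ahler metric $g$ on $T^n_\CC$ whose K\"ahler form represents $e(T^n_\CC)$, observes that $c_1(T^n_\CC)=0$, invokes the Calabi--Yau theorem to replace $g$ by the unique Ricci-flat K\"ahler metric in the same K\"ahler class, and then appeals to the Cheeger--Gromoll/Fischer--Wolf result that Ricci-flat metrics on aspherical closed manifolds are flat; the resulting flat K\"ahler metric is translation-invariant, so its K\"ahler form pulls back to the desired linear Hermitian form on $\CC^n$. Your route bypasses all of this: you simply average a K\"ahler representative of $e(T^n_\CC)$ over the translation group, which preserves closedness, cohomology class, type $(1,1)$, and pointwise positivity, yielding the constant positive $(1,1)$-form directly. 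Both proofs rely on the same input, namely that $e(T^n_\CC)$ lies in the K\"ahler cone (stated just before the lemma), so your shortcut costs nothing. The paper's heavier approach does have the virtue of parallelism with the proof of Lemma~D in the hyper-K\"ahler setting, where the Ricci-flat argument is the natural one; but for the present lemma your averaging argument is the cleaner and more self-contained choice.
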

\begin{proof} Let $g$ be a K\"ahler metric on $T^n_\CC$ with K\"ahler form $\bar \Omega$ such that the K\"ahler class $e(T^n_\CC) = [\bar \Omega]$. Since $T^n_\CC$ has trivial canonical bundle its first Chern-class $c_1(T^n_\CC)$ vanishes. Let $\Theta$ denote the Ricci form for $g$. Then $0 = c_1(T^n_\CC) = [\Theta]$. 
In particular, $\Theta$ is null-cohomologous. In this situation the Calabi-Yau existence theorem for K\"ahler metrics with prescibed Ricci curvature \cite[Chapter 11]{Be} asserts that there exists a unique Ricci flat K\"ahler metric $g'$ on $T^n_\CC$ with K\"ahler form $\bar \Omega'$,  satisfying $[\bar \Omega'] = [\bar \Omega] = e(T^n_\CC)$. As a consequence of the Cheeger-Gromoll splitting and decomposition theorem for Ricci-nonnegatively curved manifolds \cite{CG, FW} it follows that the only Ricci flat Riemannian metrics on a torus (in fact on closed aspherical manifolds) are flat metrics. This shows that $g'$ is a flat K\"ahler metric on $T^n_\CC$ and thus invariant by the holomorphic action of $T^n_\CC$ on itself. Pulling back $\bar \Omega'$ to a form $\Omega'$ on $\CC^n$, $\Omega_{\CC^n} = \Omega'$ is linear and it has the required properties. 
\end{proof}

Note that by this construction, $\rho(Q)$ preserves the positive definite Hermitian form  $\Omega_{\CC^n}$, and we may thus put $ {\rm U}(n) =  {\rm U}(\Omega_{\CC^n}) $ on $\CC^n$. \smallskip

\paragraph{\em Equivalence of bundles over the complex torus} 
By the identification $W = \CC^n$ via the biholomorphism $H$ and our construction, the bundle \eqref{eq:prin}, now equates to an equivariant  
principal bundle 
\begin{equation}\label{eq:prin2}\begin{CD}
(\ZZ = \RR\cap \pi', \RR)@>>> (\pi',X)@>>> ( \Lambda  = \pi' \big/ \RR\cap \pi' , \CC^n = X /\RR) \; 
\end{CD}
\end{equation}
that descends to a principal circle bundle 
\begin{equation}\label{eq:principalb2}
\begin{CD}
 S^1 = \RR/\ZZ  @>>>P = X/\pi' @>p>> T^n_\CC =  \CC^n/ \Lambda \;  ,  
\end{CD}\end{equation}
\vspace{0.5ex}
whose characteristic class is $e(T^n_\CC)$ as defined in \eqref{eq:e}. 

\subsection{Locally homogeneous standard $CR$-structure on $P$}\label{sec:crl}
For brevity, we put  $\bar \Omega_0 = \bar \Omega_{\CC^n}$ for the above  linear K\"ahler form on $T^n_\CC$. Since the bundle $P$ has fundamental class $e(P) =  [ \bar \Omega_{0}]$,   
there exists a $CR$-structure  $(\bar \eta,J)$ on $P$, with contact form $\bar \eta$,  which is satisfying 
 \begin{equation} \label{eq:eta_Omega} d\bar \eta = p^* \, {\bar \Omega}_0  . \end{equation}
Indeed, let  
$\bar \tau \in \Omega^1(T^n_\CC)$ be any one-form such that $\Omega_0 - \Omega =  d\tau $.  
Then 
\begin{equation} \label{eq:eta_def} 
	\bar \eta \, = \, \bar \omega + p^* \bar \tau 
\end{equation}
satisfies \eqref{eq:eta_Omega}.  
The forms $\bar \om$ and $\bar \eta$ have the same Reeb field, so that the Reeb field $\bar \xi$ generates the original $S^1$-action on $P$. That is,  $\bar \eta$ is a connection form for the principal bundle \eqref{eq:principalb2}.  \smallskip

The $CR$-structure  $(\bar \eta,J)$ on $P$ then pulls back to a $\pi'$-invariant $CR$-structure  $(\eta,J)$ on $X$, with contact form $\eta$, and Reeb field $\xi$, generating the $\RR$-action on $X$. Moreover, we have $d\eta = p^*\Omega_0$. (In particular, all the conditions on $(X, \eta, J)$
 in Section \ref{sec:standard_circle} are satisfied.) We note that the standard $CR$-structure $(\eta,J)$ on $X$ is homogeneous. Indeed,  
 $\Psh(X,\eta)$ acts transitively on $X$, since the holomorphic 
isometries of $(X, \eta, J) \big/ \, \RR = (\CC^n, \Omega_0)$ lift to elements of $\Psh(X,\eta)$ (see  \cite[Proposition 3.4]{OK}).  \smallskip

We show now that $X$ with its $CR$-structure is equivalent to the Heisenberg group 
$\cN$ equipped with its standard left-invariant $CR$-structure. In fact, it  follows that $\Psh(X,\eta) = \cN \rtimes {\rm U}(n)$, as  is noted  in  \cite[Proposition 6.1 (2)]{OK}. Here, the Reeb flow $\RR$ generates  the center of the Heisenberg group $\cN$, and $\cN/\RR =  \CC^n$. Note that, by construction,  $\pi'$ is contained in  
$\Psh(X, \eta,J)$. Therefore   $\pi'$ is a discrete uniform subgroup of $$\Psh(X, \eta,J) = \Psh(\cN) = \cN \rtimes {\rm U}(n) . $$
Note further that  $\pi'$ is a nilpotent group, since it is a central extension of $\Lambda$. Therefore $\pi' \leq \cN$ is a uniform lattice in $\cN$, by the Bieberbach theorem for nilmanifolds \cite{Aus,KLR}.  In particular, the orbit map $\cN \to X$ gives an $S^1$-equivariant diffeomorphism $$\cN/\pi' \to X/\pi'$$ over the base space $T^n_\CC$. This shows that $X/\pi'$ is a Heisenberg nilmanifold. 

\subsection{Locally homogeneous standard $CR$-structure on $X/\pi$}\label{crlquotinent}
 As we have seen, the group $\mu = \pi /\pi'$ acts on the total space $P$ of the principal circle bundle \eqref{eq:principalb2} by bundle automorphisms, and the induced action of $\mu$ on $T^n_\CC$ fixes the curvature form $\bar \Omega_0$.\smallskip 

For the final step in the proof of Theorem 3, we show now that there exists a connection form with curvature $\bar \Omega$ that is fixed by the group $\mu$:   

\begin{pro} \label{prop:invariant_eta}
There exists a connection form $\bar \vartheta$ for the principal circle bundle \eqref{eq:principalb2}, such that 
\begin{enumerate}
	\item  $d\bar \vartheta = p^* \, {\bar \Omega}_0$. 
	\item  $\mu$ is contained in $\Psh(P, \bar \vartheta, J)$. 
\end{enumerate}
\end{pro}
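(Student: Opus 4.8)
The plan is to produce the desired connection form by an averaging argument over the finite group $\mu$. We already have, from \eqref{eq:eta_def}, a connection form $\bar\eta = \bar\omega + p^*\bar\tau$ on $P$ satisfying $d\bar\eta = p^*\bar\Omega_0$, and the group $\mu$ acts on $P$ by bundle automorphisms covering a holomorphic action on $T^n_\CC$ that fixes $\bar\Omega_0$. First I would observe that for each $m \in \mu$ the pulled-back form $m^*\bar\eta$ is again a connection form for the same $S^1$-bundle (it restricts to the same form on the fibres because $\mu$ commutes with the principal $S^1$-action, this being the Reeb/center action), and its curvature is $d(m^*\bar\eta) = m^*p^*\bar\Omega_0 = p^*(\bar m^*\bar\Omega_0) = p^*\bar\Omega_0$, where $\bar m$ denotes the induced map on $T^n_\CC$. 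Hence the average
\[
\bar\vartheta \, = \, \frac{1}{|\mu|} \sum_{m \in \mu} m^* \bar\eta
\]
is again a connection form for \eqref{eq:principalb2}, is $\mu$-invariant by construction, and satisfies $d\bar\vartheta = p^*\bar\Omega_0$. This gives (1) immediately, and $\mu$-invariance is the first half of (2).

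The remaining point for (2) is that $\mu$ does not merely fix the contact form $\bar\vartheta$ but lies in $\Psh(P,\bar\vartheta,J)$, i.e.\ it also preserves the complex structure $J$ on $\ker\bar\vartheta$. Here I would argue as follows. The complex structure $J$ on $P$ comes from the $CR$-structure $(\bar\eta,J)$, whose restriction to the horizontal distribution is identified via $p_*$ with the complex structure on $T^n_\CC$ pulled back along the projection; the group $\mu$ acts on $T^n_\CC$ by \emph{holomorphic} isometries (this is Lemma~\ref{eff} together with \eqref{eq:emfini}, $\mu \leq \mathrm{U}(n)$), so the induced action on the horizontal bundle of $P$ is $J$-linear regardless of which of the above connection forms we use to split $TP$. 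One subtlety is that changing the connection form changes the horizontal distribution, so one must check that the complex structure transported to $\ker\bar\vartheta$ via the new splitting is still $\mu$-invariant; but since both $\ker\bar\eta$ and $\ker\bar\vartheta$ project isomorphically onto $TT^n_\CC$ under $p_*$ and $J$ is defined as the pullback of the torus complex structure, the $CR$-structure $(\bar\vartheta, J)$ on $P$ is again a standard $CR$-structure (its Reeb field is still the $S^1$-generator, as in the discussion around \eqref{eq:eta_def}) and $\mu$ preserves it. Thus $\mu \leq \Psh(P,\bar\vartheta,J)$.

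The step I expect to be the main obstacle is the bookkeeping around the complex structure in the averaging: one wants the single fixed $J$ on $P$ (not a $J$ depending on the connection) to be compatible with every $m^*\bar\eta$ and hence with $\bar\vartheta$, and this requires being careful that the horizontal subspaces, though they move, all carry the \emph{same} complex structure inherited from $T^n_\CC$ via $p_*$. Once that is pinned down — essentially because the $CR$ data on $X/\pi$ is built from the linear K\"ahler form $\bar\Omega_0$ and the standard complex structure on $\CC^n$, both of which $\mu$ preserves — the proposition follows, and in the subsequent argument one pulls $(\bar\vartheta,J)$ back to a $\pi$-invariant standard $CR$-structure on $X$, realizing $X/\pi$ as a Heisenberg infra-nilmanifold with $\pi \leq \cN \rtimes \mathrm{U}(n)$ exactly as in Section~\ref{sec:crl}.
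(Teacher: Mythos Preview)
Your averaging argument is correct and is the paper's own approach in concrete form: the paper frames the problem as finding a fixed point for the affine action of $\mu$ on the space $\cA(\bar\Omega_0)$ of connection forms with curvature $\bar\Omega_0$, encodes this action by a one-cocycle $c(g)=g^*\bar\eta_0-\bar\eta_0\in Z^1(T^n_\CC)$, and appeals to $H^1(\mu,Z^1(T^n_\CC))=0$ for the finite group $\mu$ --- whose standard proof is exactly your average $\bar\vartheta=\frac{1}{|\mu|}\sum_m m^*\bar\eta_0$. Your additional discussion of $J$-invariance on $\ker\bar\vartheta$ is more careful than the paper's proof, which does not address this point explicitly; it is implicitly justified, as you say, because $J$ on any horizontal complement is the $p_*$-pullback of the fixed complex structure on $T^n_\CC$, which $\mu\leq\mathrm{U}(n)$ preserves.
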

\begin{proof} Let $\cA(\bar \Omega_0) = 
\{ \bar \eta \in \Omega^1(P)  \mid d \bar \eta  = p^* \, {\bar \Omega}_0 \}$ be the space of connection forms for the bundle  \eqref{eq:eta_def} with curvature $\bar \Omega_0$. Choose 
a base point $\bar \eta_0 \in \cA(\bar \Omega_0)$. Then 
$\cA(\bar \Omega_0) = \bar \eta_0 + \{ p^* \tau \mid \tau \in Z^1(T^n_\CC) \}$ is an affine space over the vector 
space of closed one-forms $Z^1(T^n_\CC)$. 
Since the curvature $\bar \Omega_0$ is preserved by $\mu$, for any $g \in \mu$, $\bar \eta \in \cA(\bar \Omega_0)$, it follows $g^* \bar \eta \in \cA(\bar \Omega_0)$. Therefore the  group $\mu$ acts on $\cA(\bar \Omega_0)$. For $g \in \mu$, define 
$$  c( g ) = g^* \bar \eta_0 - \bar \eta_0 \, \in Z^1(T^n_\CC) \; . $$
Since $c(g_1 g_2) = g_2^* g_1^* \bar \eta_0 - \bar \eta_0 = g_2^* c(g_1) + c(g_2)$, $c: \mu \to Z^1(T^n_\CC) $ defines a one-cocycle for the natural representation of $\mu$ on $Z^1(T^n_\CC)$. 
The action of  $\mu$  on $\cA(\bar \Omega_0)$ is by affine transformations, since, for $\bar \eta \in  \cA(\bar \Omega_0)$, we have  $$ g^*   \bar \eta =  \bar \eta_0 + \, g^* (\bar \eta -  \bar \eta_0) + c(g).$$ 
There is a 
corresponding cohomology class 
$[c ] \in H^1( \mu, Z^1(T^n_\CC))$ 
associated to the one-cocycle $c$. Since $\mu$ is finite, we have 
$H^1( \mu, Z^1(T^n_\CC)) = \{0 \}$, so $c$ must be a coboundary. This implies that there exists $\tau \in  Z^1(T^n_\CC)$ such that $c(g) = g^* \tau - \tau$. As a consequence, $\bar \vartheta = \bar \eta_0 - \tau$ is a fixed point for the action of $\mu$.\end{proof}

Pulling back the connection form $\bar \vartheta$ to a connection form $\vartheta$ on the covering bundle \eqref{eq:prin2}, we note:

\begin{corollary}\label{cro:final} There exists on $X$ a connection form $\vartheta$ for the bundle \eqref{eq:prin}, 
with $d \vartheta = \Omega_0$, such that the corresponding $CR$-structure on $X$ satisfies  $$ \pi \leq \Psh(X, \vartheta,J) .$$
\end{corollary}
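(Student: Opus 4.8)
The plan is to obtain $\vartheta$ by pulling the $\mu$-invariant connection form $\bar\vartheta$ produced in Proposition \ref{prop:invariant_eta} back from $P = X/\pi'$ to the universal cover $X$, exactly as $\eta$ was obtained from $\bar\eta$ in Section \ref{sec:crl}, and then to verify that the resulting $CR$-structure is preserved not merely by $\pi'$, which is automatic, but by the full group $\pi$. Write $q\colon X \to P$ for the covering projection, whose deck transformation group is $\pi'$, and transport the complex structure $J$ on the contact distribution along with $\bar\vartheta$.

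First I would set $\vartheta = q^*\bar\vartheta$. Since $q$ is a local diffeomorphism, $(\vartheta, J)$ is again a strictly pseudoconvex $CR$-structure on $X$, and from $d\bar\vartheta = p^*\bar\Omega_0$ and the commutative square relating the quotients $X \to P \to T^n_\CC$ and $X \to \CC^n \to T^n_\CC$ one reads off $d\vartheta = p^*\Omega_0$ (identifying $\Omega_0$ on $\CC^n = X/\RR$ with its pullback to $X$, so that $d\vartheta = \Omega_0$ as stated); moreover $\vartheta$ has Reeb field $\xi$ and is invariant under the lifted $\RR$-action, hence is a connection form for the principal $\RR$-bundle \eqref{eq:prin}. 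Since $\bar\vartheta$ and $J$ are defined on the quotient $P$ while $\pi'$ acts on $X$ as the deck group of $q$, every $\gamma \in \pi'$ automatically satisfies $\gamma^*\vartheta = \vartheta$ and commutes with $J$ on $\ker\vartheta$, so $\pi' \leq \Psh(X, \vartheta, J)$. It remains to treat the finite quotient: because $Q'$ was chosen normal in $Q$, the subgroup $\pi' = \phi^{-1}(Q')$ is normal in $\pi$, and the $\pi$-action on $X$ descends to the action of $\mu = \pi/\pi'$ on $P$, so that $q\circ\gamma = \bar\gamma\circ q$ for $\gamma \in \pi$ with image $\bar\gamma \in \mu$. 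By Proposition \ref{prop:invariant_eta}(2) we have $\bar\gamma^*\bar\vartheta = \bar\vartheta$ and $\bar\gamma \in \Psh(P, \bar\vartheta, J)$, whence
\begin{equation*}
\gamma^*\vartheta = \gamma^* q^* \bar\vartheta = (q\circ\gamma)^*\bar\vartheta = (\bar\gamma\circ q)^*\bar\vartheta = q^*\bar\gamma^*\bar\vartheta = q^*\bar\vartheta = \vartheta ,
\end{equation*}
and the same naturality of $q_*$ shows that $\gamma$ commutes with $J$ on $\ker\vartheta$. Combining the two steps yields $\pi \leq \Psh(X, \vartheta, J)$, which is the assertion.

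I do not expect a serious obstacle here: the real difficulty — producing a connection form whose curvature is still the linear form $\bar\Omega_0$ while being fixed by the finite group $\mu$ — has already been overcome in Proposition \ref{prop:invariant_eta} by the vanishing of $H^1(\mu, Z^1(T^n_\CC))$. What is left is bookkeeping: the normality of $\pi'$ in $\pi$ (so that the residual action on $P$ is genuinely that of $\mu$), the compatibility of the complex structures under the covering $q$, and the fact that the pullback of a connection form is a connection form. The only mildly delicate point is to keep straight which of the two commuting quotients $X/\pi'$ and $X/\RR$ one is working with at each stage.
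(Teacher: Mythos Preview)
Your proposal is correct and follows exactly the approach indicated in the paper, which treats the corollary as an immediate consequence of Proposition \ref{prop:invariant_eta} by the single remark ``Pulling back the connection form $\bar\vartheta$ to a connection form $\vartheta$ on the covering bundle \eqref{eq:prin2}, we note:''. You have simply written out the verification that this pullback does what is needed --- in particular the equivariance argument $q\circ\gamma = \bar\gamma\circ q$ showing $\pi$-invariance from $\mu$-invariance --- which the paper leaves implicit.
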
  \smallskip

As above, the $CR$-structure $(X, \vartheta,J)$ is homogeneous and $\Psh(X, \vartheta,J) = \Psh(\cN) = \cN \rtimes {\rm U}(n)$. Therefore $X/\pi$ is an infra-nilmanifold that is finitely covered by the nilmanifold $X/\pi'$. This concludes the proof of Theorem 3.

\section{Indication to the proof of Theorem \ref{Tqc}} \label{sec:qcsolv}

\subsection{Quaternionic contact structures}\label{def:qc}
Let $X$ be a $4n+3$-dimensional smooth manifold.
A \emph{quaternionic contact structure} is a codimension $3$-subbundle $\sfD$ on
$X$ which satisfies that $\displaystyle \sfD\oplus[\sfD,\sfD]=TX$.
In addition the following conditions are required: \smallskip 

There exists a non-degenerate ${\rm Im}\, \HH$-valued $1$-form 
$\om=\om_1i+\om_2j+\om_3k$ called a \emph{quaternionic contact form} on $X$ such that
\begin{enumerate}
\item[(1)] $\displaystyle \ker\,\om=\mathop{\cap}_{i=1}^3\ker\,\om_i=\sfD$. 
\item[(2)] $\displaystyle \om\mathop{\we}\om\mathop{\we}\om\,\we  \overbrace{d\om\we\cdots\we d\om}^n\neq 0$ on $X$.
\end{enumerate}
The non-degeneracy of $d\om_k$, $k=1,2,3$, on $\sfD$ defines the bundle of endomorphisms $\{J_1,J_2,J_3\}$: 
\begin{equation}\label{hypercom}
J_k=(d\om_j\vert_{\sfD})^{-1}\circ
(d\om_{i}\vert_{\sfD}):{\sfD} \ra\, {\sfD}, \ \, (i,j,k)\sim(1,2,3) 
\end{equation}which constitutes a \emph{hypercomplex structure} on $\sfD$. 
Note that the Levi form $$d\om_i\circ J_i: \sfD\times \sfD\lra\, \RR , \; i=1,2,3$$
is a positive definite symmetric bilinear form on $\sfD$.
Then $(X,\sfD,\om,\{J_i\}_{i=1}^{3})$ is said to be a positive definite $qc$-manifold.
\smallskip

\paragraph{\em Standard $qc$-manifolds and three-Sasaki  manifolds}
For any $qc$-manifold $M$, let $\cT =\{\xi_1,\xi_2,\xi_3\}$ denote the three-dimensional integrable distribution complementary to the codimension three subbundle $\sfD$ on $M$ determined by the $qc$-structure, called \emph{$qck$-distribution} (see Section \ref{sec:parabolic_geom}). If $\cT$ generates a subgroup
of ${\Psh}_{qc}(M)$, then $M$ is called a \emph{standard} $qc$-manifold.\smallskip 

\begin{pro}\label{qck-Reebflow}
Let  $(M,\om,\{J_i\}_{i=1}^{3})$ be a closed strictly pseudoconvex
standard $qc$-manifold. Then the Reeb fields generate a
compact Lie group action of\/   $\SU(2)$ or the torus group $T^3$  on $M$. 
\end{pro}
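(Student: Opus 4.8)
The plan is to run the argument of Proposition \ref{Reebflow} in the quaternionic setting. By the definition of a standard $qc$-manifold, the $qck$-distribution $\cT=\{\xi_1,\xi_2,\xi_3\}$ generates a connected Lie subgroup $\sfA$ of the $qc$-Hermitian group ${\Psh}_{qc}(M,\om)$. By Lemma \ref{lem:inv_metric}, ${\Psh}_{qc}(M,\om)$ preserves the associated Riemannian metric $g_\om$; since $M$ is closed, $\Iso(M,g_\om)$, and hence ${\Psh}_{qc}(M,\om)$, is compact. Therefore the closure $K=\overline{\sfA}$ in ${\Psh}_{qc}(M,\om)$ is a compact connected Lie group whose Lie algebra contains $\xi_1,\xi_2,\xi_3$.

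Next I would analyse the Lie algebra $\mathfrak{a}\subseteq \mathrm{Kill}(M,g_\om)$ generated by $\xi_1,\xi_2,\xi_3$; it is finite dimensional and of compact type. Because $\cT$ is integrable, every iterated bracket of the sections $\xi_i$ is again a section of $\cT$, so $\mathfrak{a}$ consists of sections of $\cT$; as the $\xi_i$ are pointwise linearly independent (indeed $\om_l(\xi_i)=\delta_{li}$) and span $\cT$, the evaluation map $\mathfrak{a}\to T_pM$ has image exactly $\cT_p$ at every $p$. Thus the $\sfA$-orbits are the three-dimensional leaves of $\cT$, and the stabilizers of the $\sfA$-action are discrete. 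The heart of the proof is to show that $\mathfrak{a}$ is in fact three dimensional, i.e. equal to $\mathrm{span}_{\RR}\{\xi_1,\xi_2,\xi_3\}$; equivalently, the structure functions $\om_l([\xi_i,\xi_j])=-d\om_l(\xi_i,\xi_j)$ are constant on $M$. This is the analogue of the computation in Proposition \ref{Reebflow}: writing $\varphi_t^{*}\om=a_t\cdot\om\cdot\overline{a_t}$ for the Reeb flows and differentiating shows that $d\om(\xi_i,\cdot)$ is governed by the infinitesimal $\SO(3)$-rotation of the hypercomplex structure $\{J_k\}$ induced by the flow, and contracting the resulting identities against vectors tangent to $\sfD$ and using positive definiteness of the Levi forms $d\om_i\circ J_i$ forces the structure functions to be locally constant. (Alternatively one may invoke \cite[Theorem 5.4]{Ka}.) Being a three-dimensional subalgebra of a compact Lie algebra, $\mathfrak{a}$ is then either abelian, isomorphic to $\RR^3$, or semisimple, isomorphic to $\mathfrak{su}(2)$, the two cases being distinguished by whether the structure functions all vanish or are the structure constants of $\mathfrak{su}(2)$.

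Finally I would conclude case by case. If $\mathfrak{a}\cong\mathfrak{su}(2)$, then $\sfA$ is a connected compact Lie group with Lie algebra $\mathfrak{su}(2)$, hence already closed, so $K=\sfA$ is $\SU(2)$ (or its quotient $\SO(3)$). If $\mathfrak{a}\cong\RR^3$, the three Reeb flows commute; each $\xi_i$ generates a one-parameter subgroup of ${\Psh}_{qc}(M,\om)$ whose closure is a torus, and an argument parallel to Proposition \ref{Reebflow} (now comparing $\cT$ with the fundamental distribution of that torus and again using positive definiteness of $d\om_i\circ J_i$) shows the torus is one dimensional, so each $\xi_i$ generates a circle. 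Hence $\sfA$ is the image of $T^3$ under a homomorphism into ${\Psh}_{qc}(M,\om)$; since its orbits are three dimensional, $\sfA=K=T^3$. In either case the Reeb fields generate a compact Lie group action of $\SU(2)$ or $T^3$ on $M$.

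The step I expect to be the main obstacle is establishing that the Lie subalgebra generated by $\xi_1,\xi_2,\xi_3$ is exactly three dimensional — that is, the constancy of the functions $d\om_l(\xi_i,\xi_j)$ — and, in the abelian case, verifying that passing to the closure $K$ does not enlarge the orbits beyond the leaves of the $qck$-distribution. Both points reduce, as in Proposition \ref{Reebflow}, to contracting the exterior derivatives $d\om_i$ against a vector field tangent to $\sfD$ that is invariant under the relevant flow and invoking positive definiteness of $d\om_i\circ J_i$; the extra bookkeeping compared with the $CR$-case comes from the $\SO(3)$-rotational freedom in the $qc$-automorphism group, which has to be carried through the computation.
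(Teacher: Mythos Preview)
The paper gives no argument of its own for this proposition; it simply refers to \cite[Proposition 4.5]{Ka}. So there is no internal proof to compare your sketch against. Your outline is the natural strategy and is essentially what is carried out in \cite{Ka}: embed the Reeb flows in the compact group ${\Psh}_{qc}(M,\om)$ via Lemma \ref{lem:inv_metric}, show that the Lie algebra $\mathfrak{a}$ generated by $\xi_1,\xi_2,\xi_3$ is three-dimensional, and then classify three-dimensional Lie algebras of compact type. You have correctly identified the only substantive step, the constancy of the structure functions $d\om_l(\xi_i,\xi_j)$, and the extra $\SO(3)$-bookkeeping it requires compared with Proposition \ref{Reebflow}.

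Two small points. First, your claim that the $\sfA$-stabilizers are discrete already presupposes $\dim\mathfrak{a}=3$ (otherwise the evaluation $\mathfrak{a}\to\cT_p$ has a kernel), so that sentence should come after the dimension count, not before it. Second, in the abelian case, knowing that each $\xi_i$ separately closes up to a circle does not by itself yield $\sfA\cong T^3$; what is needed is that the closure $K=\overline{\sfA}$ has rank exactly three. The clean way to see this is to show that every fundamental vector field of $K$ is a section of $\cT$ (it is a limit of elements of $\mathfrak{a}\subset\Gamma(\cT)$ in the $C^1$-topology on Killing fields), so that the evaluation $\mathrm{Lie}(K)\to\cT_p$ is defined, and then argue injectivity via the Levi forms exactly as you indicate.
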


See \cite[Proposition 4.5]{Ka} for the proof. 
If the Reeb fields generate an $\SU(2)$-action then $M$ is called a \emph{three-Sasaki  manifold}  (sometimes also a quaternionic $CR$-manifold \cite{AK}).

\subsection{Aspherical standard $qc$-manifolds}\label{asphestanqc} 
Let $X/\pi$ be a $4n+3$-dimensional positive definite closed aspherical \emph{standard}
$qc$-manifold. Since  $X/\pi$ is aspherical, 
the $qck$-distribution $\hat\cT=\{\hat\xi_1, \hat\xi_2,\hat\xi_3\}$ on $X/\pi$ generates a compact Lie group action by  Proposition \ref{qck-Reebflow}. Since $X/\pi$ is aspherical, the flow is given by a three-torus $T^3\leq {\Psh}_{qc}(X/\pi)$. Moreover, 
the $T^3$-action lifts to a proper $\RR^3$-action on $X$ (as in Section \ref{sec:circle_action} in the case of circle actions).\smallskip

This gives rise to an equivariant principal bundle over $W=X/\RR^3$:
$$\displaystyle (\RR^3\cap \pi, \RR^3)\to  \, (\pi, X)\stackrel{p}\lra\, (Q,W) \; .  $$ 
Then it follows (\cf \cite{Ka}) that\smallskip

\begin{enumerate}
\item The standard qc-structure  $(\om, \{J_i\}_{i=1}^{3})$ induces a hyper-K\"ahler structure $(\Om, J = \{J_i\}_{i=1}^{3})$ on $W$,  such that $d\om=p^*\Om$, where $\Omega = \Om_1i+\Om_2j+\Om_3k$ 
and $J$ is an induced hypercomplex structure from $(\ker\, \om,J)$ with $p_*J=Jp_*$. Here $\Omega_i$ is a K\"ahler form with respect to $J_i$,
$i=1,2,3$.
\item The central group extension $\displaystyle 
1\to \RR^3\cap \pi\to \pi\stackrel{\phi}\lra Q\to  1$ embeds into the pseudo-Hermitian group of the $qc$-structure as in the diagram
\begin{equation}\label{eq:gqc}\begin{CD}
1@>>>\RR^3@>>> {\Psh}_{qc}(X)@>\phi>> {\Iso}_{hK}(W)@>>> 1\\
@. @AAA @AAA  @AAA @.\\
1@>>>\RR^3 \cap \pi@>>> \pi @>\phi>> Q@>>> 1
\end{CD}
\end{equation} where the quotient group $Q=\pi \big/\, \RR^3 \cap \pi$ acts effectively and properly discontinuously on $W$, and  
as a group of hyper-K\"ahler isometries for $(\Omega,J)$. 
\vskip0.1cm
\item[(3)]  $X$ is $qc$-homogeneous if and only if $W$ is hyper-K\"ahler homogeneous (\cite[Proposition C]{Ka}).
\end{enumerate}
\smallskip

\paragraph{\bf Detailed explanation of \eqref{eq:gqc}} 

Let $\om=\om_1i+\om_2j+\om_3k$ be a lift of
the $qc$-form of $X/\pi$ to the universal covering (\cf Section \ref{sec:parabolic_geom}). Then $\om$ is
a $\pi$-invariant non-degenerate ${\rm Im}\, \HH$-valued $1$-form
such that $\pi\leq \Psh_{qc}(X,(\om, \{J_i\}_{j=1}^3)$  as in
 (3) of \eqref{Pshqcr}.
Let $\cT=\{\xi_1, \xi_2, \xi_3\}$ be a lift of
$\hat\cT$ which generates a proper $\RR^3$-action
on $X$. 
 Since $\pi$ centralizes $\RR^3$, for every $\ga\in \pi$,
 it follows $\ga_*\xi_i=\xi_i$ $(i=1,2,3)$.
 Noting the action of $\pi$ on $\om$ from
(3) of \eqref{Pshqcr},  each element $\ga\in \pi$ satisfies
\begin{equation}\label{eq:actionpi}
\ga^*\om=\om, \ \ga_*J_i=J_i\ga_*.
\end{equation}
Recall that the 
$qc$-structure of $\displaystyle(X,\RR^3, \{\om_i, J_i\}_{i=1}^3, g_\om)$ 
induces a simply connected complete hyper-K\"ahler manifold
 $\displaystyle(W, \{\Om_{i},J_i\}_{i=1}^3, g)$ for which
$p^*\Om=d\om$ where $\om=\om_1i+\om_2j+\om_3k$ is the $\pi$-invariant one-form
on $X$ and $\Om$ is a $Q$-invariant two-form on $W$: 
 \begin{equation}\label{eq:hyper-K}
 \Om=\Om_1i+\Om_2j+\Om_3k.
 \end{equation}
Furthermore,  $g_\om=\sum_{i=1}^3\om_i\cdot\om_i+d\om_1\circ J_1$ is
a canonical Riemannian metric  on $X$ and $g=\Om_i\circ J_i$ $(i=1,2,3)$
is a hyper-K\"ahler metric on $W$.
By the equation $p^*\Om=d\om$ and \eqref{eq:actionpi}, each $\al\in Q$ 
preserves the hyper-K\"ahler struture on $W$: 
\begin{equation}\label{eq:3hquat}
\al^*\Om=\Om,\ \, \al_*J_i=J_i\al_*.
\end{equation}In paricular, $Q$ acts as K\"ahler isometries of $(W,(\Om_i,J_i))$.
\smallskip

\subsubsection{Associated principal three-torus bundle} 
 We suppose now that $Q$ admits a torsion-free subgroup of finite index. 
Thus we may choose a torsion-free normal subgroup $Q'$ of finite index in $Q$, such that $W/Q'$ is a closed aspherical manifold. 
We put  $\pi' = \phi^{-1}(Q')$ for the preimage of $Q'$ in $\pi$. Then the  central group extension  
\begin{equation}\label{eq:principaler3}
\begin{CD}
1@>>> \ZZ^3 = \RR^3 \cap \pi@>>>\pi' @>>> Q' @>>> 1 
\end{CD}\end{equation}
\vspace{0.5ex}
gives rise to a principal torus bundle 
\begin{equation}\label{eq:principalt3b}
\begin{CD}
 T^3 = \RR^3/\ZZ^3  @>>> P = X/\pi' @>p>> B = W/ Q' \ .  
\end{CD}\end{equation}

\subsection{Proof of Lemma D}
Let $g$ be hyper-K\"ahler  with hyper-K\"ahler form $\Omega$ (compare \eqref{eq:hyper-K}) on $M$. Since $g$ is hyper-K\"ahler its holonomy group is contained in $\Sp(n)$. In particular, it is contained in $\mathrm{SU}(n)$, which implies that $g$ is a Ricci- flat K\"ahler metric on $M$ (e.g.\ \cite[Proposition 10.29]{Be}). In view of the fact that $M$ is aspherical all Ricci flat metrics are flat \cite{FW}. We therefore have established that 
$g$ is a flat hyper-K\"ahler metric. 
In particular, its universal covering space must be 
isometric to $\HH^n$ with linear K\"ahler structure $\Omega$. 

\vskip0.1cm
\subsection{Hypercomplex isometric isomorphim} 
Applying Lemma D in the introduction to the hyper-K\"ahler manifold  $B = W/Q'$ asserts that $B = W/Q'$ with the hyper-K\"ahler structure \eqref{eq:hyper-K} 
is hyper-holomorphically isometric to a flat hyper-K\"ahler torus $T^n_\HH =\HH^n/\Lambda$,  where $\Lambda \leq \HH^n$ is a lattice. 

\subsection{Conclusion of the proof} 
Using basic methods developed in \cite{Ka} the remaining part of the proof of Theorem \ref{Tqc} follows a similar but simplified procedure as that of Section \ref{sec:CRsolv}. The key step is to establish that the universal covering $X$ with its $qc$-structure is homogeneous and  arises from a quaternionic Heisenberg group, that is $X$ is $qc$-isometric to a $qc$-Heisenberg group with its standard $qc$-structure.  The details shall be presented elsewhere. \medskip

\end{document}